\theoremstyle{definition} 
 \newtheorem{definition}{Definition}[section]
 \newtheorem{remark}[definition]{Remark}
\newtheorem*{notation}{Notations}
\theoremstyle{plain}      
 \newtheorem{proposition}[definition]{Proposition}
 \newtheorem{theorem}[definition]{Theorem}
\newcommand{\cb}{\mathbb{C}}
\newcommand{\zb}{\mathbb{Z}}
\newcommand{\qb}{\mathbb{Q}}
\newcommand{\nb}{\mathbb{N}}
\newcommand{\lo}{{\rm Log\,}}
\def\11{{\mathbf 1}}
\newtheorem{exampl}[subsubsection]{Example}
\def\bee{\begin{exampl}}
\def\eee{\end{exampl}}
\def\bn{\begin{notation}}
\def\en{\end{notation}}
\def\br{\begin{remark}}
\def\er{\end{remark}}
\def\bp{\begin{prop}}
\def\ep{\end{prop}}
\def\bt{\begin{thm}}
\def\et{\end{thm}}
\def\be{\begin{equation}}
\def\ee{\end{equation}}
\def\bl{\begin{lem}}
\def\el{\end{lem}}
\def\bc{\begin{cor}}
\def\ec{\end{cor}}
\def\bd{\begin{defn}}
\def\ed{\end{defn}}
\title{ Alphabets, Rewriting Trails and Periodic Representations in Algebraic Bases
}
\numberwithin{equation}{subsection}
\author{Denys Dutykh$\dag$}
\thanks{}
\address{$\dag$Univ. Grenoble Alpes, Univ. Savoie Mont Blanc, CNRS UMR 5127, LAMA, 
F-73000 Chamb\'ery, France}
\email{Denys.Dutykh@univ-smb.fr}  
\author{Jean-Louis Verger-Gaugry$\ddag$}
\thanks{}
\address{$\ddag$Univ. Grenoble Alpes, Univ. Savoie Mont Blanc, CNRS UMR 5127, LAMA, 
F-73000 Chamb\'ery, France}
\email{Jean-Louis.Verger-Gaugry@univ-smb.fr}
\begin{document}



\maketitle

\begin{abstract}
For $\beta > 1$ a real algebraic integer 
({\it the base}),
the finite alphabets $\mathcal{A}
\subset \zb$ which realize
the identity $\qb(\beta) = 
{\rm Per}_{\mathcal{A}}(\beta)$,
where ${\rm Per}_{\mathcal{A}}(\beta)$ is the set of
complex numbers which are
$(\beta, \mathcal{A})$-eventually periodic
representations, are investigated.
Comparing with the greedy algorithm,
minimal and natural alphabets are defined.
The natural alphabets are shown to be 
correlated to the asymptotics of the
Pierce numbers of the base $\beta$ and Lehmer's problem.
The notion of rewriting trail is introduced
to construct intermediate alphabets
associated with small polynomial values of the base.
Consequences on the representations
of neighbourhoods of the origin
in $\qb(\beta)$,  
generalizing Schmidt's theorem related to Pisot numbers,
are investigated.
Applications to 
Galois conjugation
are given 
for convergent sequences of 
bases 
$\gamma_s := \gamma_{n, m_1 , \ldots , m_s}$  
such that
$\gamma_{s}^{-1}$ is the unique root in $(0,1)$
of an almost Newman polynomial of the type
$-1+x+x^n +x^{m_1}+\ldots+
x^{m_s}$, $n \geq 3$, 
$s \geq 1$, $m_1 - n \geq n-1$,
$m_{q+1}-m_q \geq n-1$ for all
$q \geq 1$.
For $\beta > 1$ a
reciprocal algebraic integer close to one,
the poles of modulus $< 1$
of the dynamical zeta function of
the $\beta$-shift $\zeta_{\beta}(z)$
are shown, under some assumptions,
to be zeroes of the minimal polynomial of
$\beta$.
\end{abstract}

Keywords: alphabet, periodic
representation, 
Pierce number, 
Galois conjugate,
beta-shift,
dynamical zeta function.

\vspace{0.4cm}

2020 Mathematics Subject Classification:
11A63, 11A67, 11B83, 11K16, 11R04, 11R06.


\tableofcontents

\section{Introduction}
\label{S1}

For a general complex number $\beta \in \cb$,
$|\beta| > 1$, and a finite alphabet
$\mathcal{A} \subset \cb$,
we define the
$(\beta, \mathcal{A})$-representations as
expressions of the form
$\sum_{k \geq -L} a_k \beta^{-k}$,
$a_k \in \mathcal{A}$,
for some integer $L \in \zb$.
They are Laurent series of $1/\beta$.
We define
$${\rm Per}_{\mathcal{A}}(\beta)
:=
\{ x \in \cb \mid x\, {\rm \,has \,an \,eventually 
\,periodic}
\, (\beta, \mathcal{A}){\rm -representation}\}.
$$
In this note attention is focused on
the complex numbers $\beta$ which are real algebraic
integers $> 1$,
close to 1, assuming that
$\beta$ has no conjugate on the unit circle,
and on the alphabets 
$\mathcal{A} \subset \zb$, depending upon $\beta$,
involved
in the identity:
$$\qb(\beta) = {\rm Per}_{\mathcal{A}}(\beta).$$
We write $\qb$ for the set of rational numbers, 
$\qb(\beta)$ for the smallest sub‐field of $\cb$ 
containing $\beta$.
Indeed, such an identity always holds by the 
following theorem.

\begin{theorem}[Kala -V\'avra \cite{kalavavra}]
\label{kalavavra}
Let $\beta \in \cb$ be an algebraic number 
of degree $d$,
$|\beta| > 1$,
and let $a_d x^d - a_{d-1} x^{d-1} - \ldots
- a_1 x - a_0 \in \zb[x]$ be its minimal polynomial. 
Suppose that
$|\beta'| \neq 1$ for any conjugate 
$\beta'$ of $\beta$.
Then there exists a finite alphabet
$\mathcal{A} \subset \zb$ such that
\begin{equation}
\label{QbetaPer}
\qb(\beta) = {\rm Per}_{\mathcal{A}}(\beta).
\end{equation}
\end{theorem}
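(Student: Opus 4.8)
The plan is to establish the two inclusions in \eqref{QbetaPer} separately; only $\qb(\beta)\subseteq{\rm Per}_{\mathcal{A}}(\beta)$ carries content. The inclusion ${\rm Per}_{\mathcal{A}}(\beta)\subseteq\qb(\beta)$ holds for \emph{every} finite $\mathcal{A}\subset\zb$: since $|\beta|>1$, a $(\beta,\mathcal{A})$-representation converges in $\cb$, and if it is eventually periodic with pre-period ending at index $N-1$ and period $p\geq1$ its value is
\[
\sum_{k=-L}^{N-1}a_{k}\beta^{-k}+\frac{\beta^{-N}}{1-\beta^{-p}}\sum_{i=0}^{p-1}a_{N+i}\beta^{-i}\in\qb(\beta)
\]
(here $\beta^{p}\neq1$ as $|\beta|>1$). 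The work is entirely in the reverse inclusion, and I would organize it as follows.

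Fix $x\in\qb(\beta)$; the goal is to produce an eventually periodic expansion $x=\sum_{k\geq-L}a_{k}\beta^{-k}$ with $\mathcal{A}\subset\zb$ finite and, crucially, \emph{depending only on $\beta$}. The engine is the ``$\beta^{k}$-scaled error'' $\delta_{k}:=\bigl(\sum_{-L\leq j\leq k}a_{j}\beta^{k-j}\bigr)-\beta^{k}x$, for which a one-line computation gives $\delta_{k}=\beta\,\delta_{k-1}+a_{k}$, while $x-\sum_{-L\leq j\leq k}a_{j}\beta^{-j}=-\beta^{-k}\delta_{k}$. Thus choosing the digits amounts to running an orbit of the affine maps $\delta\mapsto\beta\delta+a$ ($a\in\mathcal{A}$) and keeping it bounded, and once it is bounded the series converges to $x$ at the distinguished archimedean place $v_{0}$ (the embedding sending $\beta$ to itself). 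Let $K:=\qb(\beta)$ and let $S_{E}$ be the finite set of places $v$ of $K$ with $|\beta|_{v}>1$ — namely $v_{0}$, the conjugate archimedean places where $|\sigma(\beta)|>1$, and the finitely many non-archimedean places where $\beta$ is non-integral. At every place $v\notin S_{E}$ the recursion is non-expanding ($|\beta|_{v}\leq1$) and $|a_{k}|_{v}\leq1$ in the non-archimedean case, so $|\delta_{k}|_{v}$ stays bounded automatically, uniformly in $k$; hence the whole obstruction is concentrated on $S_{E}$.

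The crux is a digit-selection lemma: there exist a finite alphabet $\mathcal{A}\subset\zb$ and a relatively compact $\Omega\subset V_{E}:=\prod_{v\in S_{E}}K_{v}$, both depending only on $\beta$, such that for the diagonal action $\beta\,\Omega+\mathcal{A}\supseteq\Omega$ — equivalently, every orbit can, after finitely many preliminary digits, be funnelled into $\Omega$ and thereafter kept inside it. This is exactly where the hypothesis that $\beta$ has \emph{no conjugate on the unit circle} enters decisively: it makes $|\beta|_{v}>1$ \emph{strictly} on each factor of $V_{E}$, so $\beta$ acts there by a genuine dilation, while the arithmetic of $\zb[\beta,\beta^{-1}]$ — whose diagonal image is cobounded in $V_{E}$ — allows the one-parameter family of integer digits, iterated through these dilations, to cover $\Omega$. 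Carrying this covering argument out simultaneously across the archimedean \emph{and} non-archimedean expanding completions with a single finite set of rational-integer digits is, I expect, the main obstacle and is the technical heart of the argument; in the Pisot case it degenerates to the elementary observation that $\beta[0,R]+\{-\lceil\beta R\rceil,\dots,0\}\supseteq[0,R]$.

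Granting the lemma, I would fix a selection map $f\colon\Omega\to\mathcal{A}$ with $\beta\delta+f(\delta)\in\Omega$, steer $\delta$ into $\Omega$ using finitely many digits, and then iterate $\delta_{k}=\beta\delta_{k-1}+f(\delta_{k-1})$. The resulting sequence $(\delta_{k})_{k}$ is bounded at every place of $K$ — inside $\Omega$ on $S_{E}$, and automatically elsewhere (with a fixed finite $v$-adic bound at the finitely many places dividing the denominator of $x$) — and each $\delta_{k}$ lies in the fixed number field $K$. By Northcott's theorem a subset of $K$ of bounded height is finite, so $\{\delta_{k}:k\}$ is finite; since $\delta_{k+1}$ is a function of $\delta_{k}$, the sequence $(\delta_{k})$, hence the digit string $(a_{k})$, is eventually periodic, which yields the desired $(\beta,\mathcal{A})$-representation of $x$. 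Finally, because $\Omega$ and therefore $\mathcal{A}$ were chosen depending only on $\beta$, the same alphabet serves every $x\in\qb(\beta)$ at once; only the pre-period and period lengths grow with the denominator of $x$.
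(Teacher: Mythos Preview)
The paper does not give its own proof of this theorem: it is quoted from \cite{kalavavra} as a black box, and the paper's contributions (Proposition~\ref{poly_dominant}, Proposition~\ref{pierceminorant}, Theorem~\ref{mainthm}) take it as input. So there is nothing in the paper to compare your argument against.

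On its own merits, your outline follows the shape of the original Kala--V\'avra argument (track the scaled error $\delta_{k}$ at all places of $K=\qb(\beta)$, keep it bounded everywhere, then invoke a Northcott-type finiteness to force eventual periodicity). Two comments. First, your localisation of the hypothesis $|\beta'|\neq1$ is off: you write that it ``makes $|\beta|_{v}>1$ strictly on each factor of $V_{E}$'', but that is already the \emph{definition} of $S_{E}$. Where the hypothesis actually bites is at the archimedean places \emph{outside} $S_{E}$: if some conjugate had $|\beta'|=1$, the corresponding place $v$ would satisfy $|\beta|_{v}=1$, and then the recursion $\delta_{k}=\beta\delta_{k-1}+a_{k}$ is neither contracting nor controllable by digit choice, so $|\delta_{k}|_{v}$ can drift to infinity and your height bound collapses. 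Your earlier sentence ``at every place $v\notin S_{E}$ \ldots\ $|\delta_{k}|_{v}$ stays bounded automatically'' is precisely the step that fails without the hypothesis, so the hypothesis is already silently used there rather than only in the covering lemma. Second, you correctly flag the digit-selection lemma (finding a single $\mathcal{A}\subset\zb$ and a relatively compact $\Omega$ with $\Omega\subseteq\beta\,\Omega+\mathcal{A}$ simultaneously at all expanding completions, archimedean and non-archimedean) as the technical heart and leave it unproved; that is honest, but it means what you have is a proof plan rather than a proof.
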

Theorem \ref{kalavavra}
is a generalization
of a previous theorem of 
Baker, Mas\'akov\'a, Pelantov\'a and V\'avra
\cite{bakermasakovapelantovavavra}
in which $1/a_d$ was assumed to belong to
$\zb[\beta, \beta^{-1}]$,
an assumption removed in
\cite{kalavavra}.

In Section \ref{S2} we revisit the
construction of an alphabet
$\mathcal{A} \subset \zb$,
symmetrical 
with respect to the origin,
 which
allows \eqref{QbetaPer} to hold, given in
\cite{frougnypelantovasvobodova}. 
We show that the size of this alphabet
is correlated to the Pierce numbers
$\Delta_{N}(\beta)$
of $\beta$. The 
numerical explosion 
of $\Delta_{N}(\beta)$ with $N$
has been 
investigated in \cite{einsiedlereverestward}.
Pierce numbers play an important role
in the 
Mahler measure of $\beta$
and the search of big prime numbers
(Lehmer
\cite{lehmer}, Einsiedler, Everest and Ward
\cite{einsiedlereverestward}).
The alphabet constructed by this means 
is called the {\it natural 
alphabet realizing}  $\eqref{QbetaPer}$.
We denote it by $\mathcal{A}^{(nat)}_{\beta}$.
It has no reason to be the smallest one
realizing  $\eqref{QbetaPer}$.

\begin{remark}
Denote by 
$$\widehat{\mathcal{A}} := \Bigl\{\{-m, -m+1, \ldots,
-1, 0, +1,
\ldots, m-1, m\} \mid m \in \nb\setminus \{0\}
\Bigr\}$$
the set of symmetrical alphabets with digits in
$\zb$. 
It is totally ordered by inclusion.
If $\mathcal{A}_1 =
\{-m_1, \ldots, 0,\ldots, m_1\}$, $\mathcal{A}_2
=
\{-m_2, \ldots, 0,\ldots, m_2\}$, are two elements
of $\widehat{\mathcal{A}}$, then
$$\mathcal{A}_1 \subset \mathcal{A}_2
\qquad {if ~and~ only~ if}
\qquad
m_1 \leq m_2.$$
The explicit construction
of the map $\beta \to \mathcal{A}_{\beta}^{(nat)}
\in \widehat{\mathcal{A}}$,
as in Section \ref{S2}, proves the existence
of at least one alphabet
say $\mathcal{A}_{\beta}
 \in \widehat{\mathcal{A}}$ realizing 
 $\eqref{QbetaPer}$, included 
 (a priori not necessarily strictly) in
 $\mathcal{A}_{\beta}^{(nat)}$.
This justifies the terminology ``natural"
for $\mathcal{A}_{\beta}^{(nat)}$.
Let us note that,
if a
finite alphabet $\mathcal{A}_{\beta}
\in \widehat{\mathcal{A}}$ realizes (1), 
then any of its finite
supersets does that, and could be bigger than
$\mathcal{A}_{\beta}^{(nat)}$. 
Therefore there
is interest in characterizing 
the symmetrical alphabets 
$\mathcal{A}_{\beta} 
\subset \mathcal{A}^{(nat)}_{\beta}$
which 
realize  $\eqref{QbetaPer}$. 
Because of the total ordering of
$\widehat{\mathcal{A}}$, among all of them,
there is an unique smallest element, say
$\mathcal{A}_{mini}$.
We have
\vspace{0.1cm}

$$\{-1, 0, +1\} 
\subset
\ldots 
\subset 
\mathcal{A}_{mini}
\subset
\ldots 
\subset
\mathcal{A}_{\beta}
\subset
\ldots 
\subset
\mathcal{A}^{(nat)}_{\beta}
\subset
\ldots 
$$
\end{remark}

\

\noindent
{\bf Problem}: For $\beta$ any 
real algebraic
integer $> 1$ such that
$\beta$ has no conjugate on the unit circle,
what is the minimal symmetrical
alphabet 
$\mathcal{A} \subset \zb$,
$\mathcal{A} \in \widehat{\mathcal{A}}$,
realizing 
$\eqref{QbetaPer}$?

\

The minimal alphabet 
$\mathcal{A}_{mini}$ a priori depends upon $\beta$.
Intermediate alphabets between
$\mathcal{A}_{mini}$ and
$\mathcal{A}^{(nat)}_{\beta}$ realizing  
$\eqref{QbetaPer}$ are 
investigated by introducing
rewriting trails in Section \ref{S3}.

If $\beta$ is a Pisot number the problem is solved
by
the following theorem \cite{schmidt},
with the {\it minimal alphabet}
$\mathcal{A} = \{-1, 0, +1\}$
(independent of $\beta$).  
The set
${\rm Per}_{\{0,1\}}(\beta)$ is
the set of (eventually) periodic points for 
the $\beta$-transformation 
$T_{\beta}: x \to \{\beta x\}$ on
$[0,1)$, i.e. for the set of points 
whose orbits under $T_{\beta}$, are finite. 
The $(\beta,\{-1,0,1\} )$-eventually
periodic representations of the elements
$x \in \qb(\beta) \cap (-1, +1)$ are 
the R\'enyi expansions,
equivalently they are constructed from 
the greedy algorithm. 
Then all the elements of $\qb(\beta)$ 
have eventually
periodic representations.

\begin{theorem}[K. Schmidt \cite{schmidt}]
\label{schmidttheorem}
Let $\beta > 1$ be a real number.

(1) If $\qb \cap [0,1) \subset{\rm Per}_{\{0,1\}}(\beta)$, then $\beta$ is either a Pisot or a Salem number.

(2) If $\beta$ is a Pisot number, then 
${\rm Per}_{\{0,1\}}(\beta) = \qb(\beta) \cap [0,1]$.
\end{theorem}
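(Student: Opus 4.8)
The plan is to prove Schmidt's theorem in two parts, treating the characterization of bases admitting periodic points and the converse for Pisot numbers separately. For part (1), I would start from the hypothesis $\qb \cap [0,1) \subset {\rm Per}_{\{0,1\}}(\beta)$ and derive algebraicity of $\beta$ first: pick a rational $p/q \in [0,1)$ whose $\beta$-expansion is eventually periodic, say with preperiod length $m$ and period length $n$. Writing out the relation $T_\beta^{m+n}(p/q) = T_\beta^m(p/q)$ as an algebraic identity, one obtains a polynomial equation satisfied by $\beta$ with rational (hence integer, after clearing denominators) coefficients, showing $\beta$ is an algebraic number; a closer look at the leading terms shows $\beta$ is in fact an algebraic integer. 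The harder half is to constrain the conjugates. The key idea is to use the fact that for every rational $x \in [0,1)$ the orbit $\{T_\beta^k(x) : k \geq 0\}$ is finite, hence bounded, and all these orbit points lie in $\qb(\beta) \cap [0,1)$; expressing $T_\beta^k(x)$ in terms of a fixed $\zb$-basis of $\zb[\beta]$ and applying a Galois conjugation $\sigma$ sending $\beta \mapsto \beta'$, one gets that the conjugated orbit points must also stay bounded (uniformly, since the number of digit strings of bounded length is finite), which forces $|\beta'| \leq 1$ for all conjugates $\beta' \neq \beta$. Ruling out $|\beta'| > 1$ this way, and combined with $\beta$ being an algebraic integer, yields that $\beta$ is Pisot or Salem.

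For part (2), assume $\beta$ is Pisot with conjugates $\beta_2, \ldots, \beta_d$ all of modulus $< 1$, and let $x \in \qb(\beta) \cap [0,1]$. The inclusion ${\rm Per}_{\{0,1\}}(\beta) \subseteq \qb(\beta) \cap [0,1]$ is immediate since a periodic $(\beta,\{0,1\})$-representation sums to an element of $\qb(\beta)$ lying in $[0,1]$. For the reverse inclusion, run the greedy ($\beta$-)algorithm on $x$, producing digits $a_k \in \{0,1\}$ and remainders $r_k = T_\beta^k(x) \in \qb(\beta) \cap [0,1)$. The plan is to show the sequence $(r_k)_{k \geq 0}$ takes only finitely many values. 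Write $x = c/N$ with $c \in \zb[\beta]$ and $N \in \zb$ a common denominator chosen so that all $r_k \in \tfrac{1}{N}\zb[\beta]$; this is possible because $r_{k+1} = \beta r_k - a_k$ and $\zb[\beta]$ is stable under multiplication by $\beta$. So each $N r_k$ is an algebraic integer in $\zb[\beta]$. Now for any embedding $\sigma_j : \qb(\beta) \to \cb$ with $\sigma_j(\beta) = \beta_j$, one estimates $|\sigma_j(N r_{k+1})| = |\beta_j \sigma_j(N r_k) - N a_k| \leq |\beta_j|\,|\sigma_j(N r_k)| + N$, and since $|\beta_j| < 1$ this recursion shows $|\sigma_j(N r_k)|$ is bounded by a constant independent of $k$ (the fixed point of the affine contraction plus transient). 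Meanwhile $|N r_k| \leq N$. Hence every conjugate of the algebraic integer $N r_k$ is bounded in modulus by a fixed constant, so $N r_k$ ranges over a finite subset of $\zb[\beta]$ (only finitely many algebraic integers of bounded degree have all conjugates bounded). Therefore $(r_k)$ is eventually periodic, i.e. the greedy expansion of $x$ is eventually periodic, which is exactly $x \in {\rm Per}_{\{0,1\}}(\beta)$. One should also handle the endpoint $x = 1$ separately (its representation is $0.\overline{d_1 d_2 \cdots}$ where $d_1 d_2 \cdots$ is the Rényi expansion of $1$, which is eventually periodic precisely when... — actually for $x=1$ one uses $1 = \sum d_i \beta^{-i}$ with the quasi-greedy expansion, eventually periodic since $\beta$ is Pisot by Bertrand–Parry), or simply note $1 \in {\rm Per}$ follows from the same boundedness argument applied to the quasi-greedy orbit.

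The main obstacle I anticipate is the conjugate-boundedness step in part (1): unlike part (2) where the greedy remainders live in a single nice ring $\tfrac1N\zb[\beta]$ and the contraction estimate is clean, in part (1) we only know \emph{some} rationals have finite orbits, and we must leverage finiteness (not just periodicity) of orbits to pin down \emph{all} conjugates, including the possibility of complex conjugates off the real line and the delicate boundary case $|\beta'| = 1$ that distinguishes Salem from Pisot. Handling this requires care: one argues that if some $|\beta'| > 1$, then choosing rationals $p/q$ with $q \to \infty$ produces orbit points whose conjugates under $\sigma$ grow without bound, yet these orbit points have denominators dividing a fixed power related to the leading coefficient, contradicting that they lie in a fixed finite set once we also know they are algebraic integers times $1/(\text{bounded})$. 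Making the denominators uniformly controlled across all the rationals $p/q$ simultaneously is the technical crux. The rest — the algebraic identity giving integrality of $\beta$, and the routine verification that periodic representations land in $\qb(\beta)\cap[0,1]$ — is straightforward bookkeeping.
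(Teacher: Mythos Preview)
The paper does not prove this theorem. Theorem~\ref{schmidttheorem} is stated in the introduction with attribution to Schmidt~\cite{schmidt} and is used as a known result; no proof appears anywhere in the paper. So there is no ``paper's own proof'' to compare your proposal against.

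That said, your outline is essentially Schmidt's original argument. For part~(2) your sketch is correct and standard: the greedy remainders $r_k = T_\beta^k(x)$ lie in $\tfrac{1}{N}\zb[\beta]$, the recursion $\sigma_j(N r_{k+1}) = \beta_j\,\sigma_j(N r_k) - N a_k$ with $|\beta_j|<1$ gives uniform bounds on all archimedean absolute values, and Northcott-type finiteness finishes it. For part~(1) you have the right shape and you correctly flag the genuine difficulty: controlling denominators uniformly across a family of rationals so that a hypothetical conjugate $|\beta'|>1$ forces unbounded conjugate orbits. Schmidt handles this by working with a fixed rational prime $p$ and the orbit of $1/p$, using that periodicity forces $T_\beta^m(1/p)(\beta^n - 1) \in \zb[\beta]$ for suitable $m,n$, then comparing sizes under conjugation; your sketch is compatible with this but leaves the denominator-control step vague. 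The endpoint $x=1$ in part~(2) is indeed handled by the Bertrand--Schmidt result that Pisot numbers are Parry numbers, as you note.

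In short: your proposal is a faithful outline of Schmidt's proof, but the present paper simply quotes the theorem and does not reprove it.
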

If $\beta$ is a Pisot number 
and
$x \in [-1,0] \cap \qb(\beta)$, then 
$-x$ admits an eventually periodic
representation in base $\beta$, with digits
in $\{-1, 0\}$, which is the opposite of the one
of $|x|$, so that
any $x \in [-1,+1]$, hence any $x \in \qb(\beta)$,
has an eventually periodic 
representation with digits in 
the symmetric alphabet
$\{-1,0,+1\}$.
By comparison,
the natural alphabets 
$\mathcal{A}^{(nat)}_{\beta_k}$
associated to the Pisot numbers $\beta_k$
belonging to an increasing sequence  
tending to $(1+\sqrt{5})/2$,
calculated by means of Proposition \ref{poly_dominant},
are studied 
in Section \ref{S2.2}.

Dar\'oczy and K\'atai \cite{daroczykatai},
and later Thurston \cite{thurston},
have proved that for any non-real $\beta \in \cb$,
$|\beta| > 1$, there exists a finite alphabet
$\mathcal{A} \subset \cb$ such that every 
$x \in \cb$ has a 
$(\beta, \mathcal{A})$-representation.
The search for periodic representations
in radix systems goes back
to Kov\'acs \cite{kovacs} and
to Kov\'acs and K\"ornyei \cite{kovacskornyei}
(see also Peth\H{o}
\cite{pethoe}). 
For the R\'enyi-Parry numeration
system in base $\beta > 1$, 
the idea of the enlargement of the alphabet
to obtain the eventual periodicity 
for the representations of the 
elements of the number field
$\qb(\beta)$ is recurrent.

Theorem \ref{mainthm} extends 
Schmidt's Theorem 
\ref{schmidttheorem}
to the 
representations 
of the elements 
of $\qb(\gamma) \cap \mathcal{V}$ 
where $\mathcal{V}$ is a neighbourhood of
the origin, 
and $\gamma > 1$ an algebraic integer,
root of a polynomial
with coefficients in $\{-1,0,1\}$, 
having no  conjugate on the unit circle.
In Section
\ref{S3.1} we introduce the notion 
of {\it rewriting trail}. 
We show that
intermediate alphabets, between the minimal 
and the natural ones,
are produced by rewriting trails.
The proof of Theorem \ref{mainthm}
is based on rewriting trails, and
makes use of Kala - Vavra's Theorem \ref{kalavavra}.

\begin{theorem}
\label{mainthm}
Let $\gamma > 1$ be an algebraic integer, 
root
of a polynomial 
$S_{\gamma}(X) = X^s -
\sum_{i=0}^{s-1} t_{s-i} X^i$, with $s \geq 1,
t_i \in \zb, |t_i| \leq 1$, 
not necessarily irreducible, such that
$|\gamma'| \neq 1$ for any conjugate 
$\gamma'$ of $\gamma$.

Let $P(X)=1 + a_1 X +a_2 X^2 + \ldots + 
a_{d-1} X^{d-1} +a_d X^d \in \zb[X]$,
$d =\deg P \geq 1$, be an integer polynomial. 
Denote by $H=\max_{i=1,\ldots,d} |a_i|$
the height of $P$. 

Let $0 < \eta < 1$ and suppose 
$0 \neq |P(\gamma)|< \eta$. Then
the polynomial value 
$P(\gamma) \in \qb(\gamma)$ 
admits at least one eventually periodic
representation
\begin{equation}
\label{pervaluesHd}
P(\gamma) 
=
R(\gamma^{-1}) + \frac{1}{\gamma^L} 
\sum_{j=0}^{\infty} \frac{1}{\gamma^{j r }} 
T(\gamma^{-1})
\qquad \in {\rm Per}_{\mathcal{A}}(\gamma)
\end{equation}
with \begin{enumerate}
\item[(i)]
alphabet 
$\mathcal{A} = \{-m, \ldots, +m\} \subset \zb$\,,
$m=\lceil2((2^{d} -1) H + 2^d)/3\rceil$,
independent of $s$ and $\gamma$,
\item[(ii)] $R(X) \in \mathcal{A}[X]$, 
$\deg R \leq s-1$,
$T(X) \in \mathcal{A}[X]$,
$\deg T \leq s-1$,
and 
$L$ and $r$ being some integers satisfying
$L > \deg R$,
$r > \deg T$,
\item[(iii)] preperiod
$$R(\gamma^{-1}) = \frac{a_w}{\gamma^w}
+\frac{a_{w+1}}{\gamma^{w+1}}
+\ldots
+\frac{a_{w+s-1}}{\gamma^{w+s-1}},
\qquad a_j \in \mathcal{A}, 
j=w,  \ldots, w+s-1,~
a_w \neq 0,$$
with $w \geq 1$ satisfying 
$\frac{\kappa_{\gamma, \mathcal{A}}}{\eta} 
\leq \gamma^{w-1}$ for some positive constant
$\kappa_{\gamma, \mathcal{A}}$ depending upon
$\gamma$ and $\mathcal{A}$.
\end{enumerate}

\end{theorem}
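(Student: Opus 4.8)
The plan is to produce the eventually periodic $(\gamma,\mathcal{A})$-representation of $P(\gamma)$ by a rewriting procedure that converts the small polynomial value $P(\gamma)$ into an infinite tail expansion, and then to control the digit size using the relation $S_\gamma(\gamma)=0$. First I would observe that since $\gamma$ is a root of $S_\gamma(X)=X^s-\sum_{i=0}^{s-1}t_{s-i}X^i$ with $|t_i|\le 1$, we have the rewriting rule $\gamma^s = \sum_{i=0}^{s-1} t_{s-i}\gamma^i$, equivalently, dividing by a suitable power, a rule of the form $\gamma^{-k} = \sum_{i=1}^{s} t_i \gamma^{-k-i}$ (up to reindexing), which lets one push any occurrence of a fixed power of $\gamma^{-1}$ to the right, at the cost of spreading it over the next $s$ positions with coefficients bounded by $1$. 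Iterating this rule starting from a single symbol placed at position $w$ produces an infinite series $\gamma^{-w}=\sum_{j\ge w} c_j \gamma^{-j}$; the key quantitative point, which I would prove by a generating-function / transfer-matrix argument, is that the partial digit sums $c_j$ stay bounded, and that in fact one may group the rewriting into blocks of length $r$ (with $r$ slightly larger than $s$) so that after the preperiod the expansion becomes genuinely periodic with period polynomial $T(X)$ of degree $\le s-1$. The worst-case accumulated digit is what produces the bound $m=\lceil 2((2^d-1)H+2^d)/3\rceil$: each of the $d+1$ coefficients $a_i$ of $P$, of size $\le H$, when carried through the rewriting can at worst double $d$ times (the $2^d$ factor), and the geometric-type summation of carries contributes the $\frac{2}{3}$ factor from $\sum_{k\ge 1} 2^{-k}\cdot$something; I would make this precise by tracking an explicit invariant on the carry vector.

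Concretely, the steps I would carry out are: (1) Write $P(\gamma)=\sum_{i=0}^d a_i\gamma^i = \gamma^{-w}\sum_{i=0}^d a_i \gamma^{w+i}$ for a large shift $w$ to be chosen, so that we are representing a number of the form $\gamma^{-w}\cdot(\text{integer combination of nonnegative powers})$; the hypothesis $|P(\gamma)|<\eta$ together with the choice $\kappa_{\gamma,\mathcal{A}}/\eta\le\gamma^{w-1}$ guarantees that the ``tail'' we are building is small enough that no symbol needs to appear at positions $<w$, which is what makes the preperiod start at position $w$ with $a_w\ne 0$. (2) Use the rewriting rule coming from $S_\gamma$ to eliminate all nonnegative powers of $\gamma$, converting the finite sum into an infinite Laurent series in $\gamma^{-1}$; control the digits during this elimination, showing they never leave $\{-m,\dots,m\}$. (3) Invoke an eventual-periodicity mechanism: either directly, because the ``state'' of the rewriting (a bounded integer vector of length $s$ recording pending carries) lives in a finite set and hence must eventually cycle, giving periodicity with some period $r$ and periodic block $T(\gamma^{-1})$ of degree $<s$; or, if one prefers, apply Theorem \ref{kalavavra} to $\gamma$ to know a priori that $P(\gamma)\in\qb(\gamma)={\rm Per}_{\mathcal{A}'}(\gamma)$ for some alphabet $\mathcal{A}'$ and then re-encode into $\mathcal{A}$ via the rewriting trail. (4) Read off the shape \eqref{pervaluesHd}, identify $R$ as the (finite) preperiodic part of degree $\le s-1$ after the initial pure shift, identify $T$ as the period, and verify the degree and nonvanishing constraints in (ii)--(iii).

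The main obstacle I expect is step (2)--(3): proving the uniform digit bound $m$ and simultaneously the eventual periodicity, in a way that is genuinely independent of $s$ and of $\gamma$ (only depending on $d$ and $H$). The delicate point is that the rewriting rule has $s$-dependent ``width'', so a naive carry analysis would give an $s$-dependent bound; the resolution is that the carries generated have coefficients bounded by $1$ (because $|t_i|\le 1$), so the relevant quantity is not the number of positions touched but the total mass $\sum|c_j|$ of the carry, which satisfies a contraction estimate independent of $s$. Making the passage from ``bounded carry mass'' to ``bounded individual digit'' rigorous, and extracting from the bounded-state pigeonhole the clean period polynomial $T$ with $\deg T\le s-1$ (rather than some larger degree), is where the real work lies; I would handle it by choosing the block length $r$ to be the first return time of the carry-state to a recurrent value and showing this forces the periodic block to have the stated degree. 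The constant $\kappa_{\gamma,\mathcal{A}}$ in (iii) is then simply an explicit bound on the norm of the tail series $\sum_{j\ge 0}\gamma^{-jr}T(\gamma^{-1})$ relative to the leading term, which comes out of the same estimates.
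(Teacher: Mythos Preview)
Your high-level intuition—use the relation $S_\gamma^*(\gamma^{-1})=0$ as a rewriting rule, exploit $|t_i|\le 1$ to control carries, and extract periodicity from a finite state space—is in the right spirit. However, the organization you propose has a genuine gap, and it differs from the paper's argument in a way that matters for the alphabet bound.

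\textbf{Where your plan breaks.} In your step~(1) you introduce the shift $P(\gamma)=\gamma^{-w}\sum_i a_i\gamma^{w+i}$ with $w$ large (depending on $\eta$), and in step~(2) you propose to rewrite the positive powers $\gamma^{w+i}$ down to negative powers. But each application of the rewriting rule lowers the top exponent by at most $s$, and in the worst case by $1$; pushing $\gamma^{w+d}$ below $\gamma^0$ therefore requires on the order of $w$ rewriting steps. Your own doubling heuristic then gives a digit bound of order $2^{w}$, which depends on $\eta$ through $w$ and destroys the claim that $\mathcal{A}$ depends only on $(d,H)$. Your proposed fix via a ``total mass contraction independent of $s$'' does not work either: a single application of $1=\sum_{i=1}^s t_i\gamma^{-i}$ can increase the $\ell^1$-mass of the carry by a factor up to $s$, so there is no $s$-independent contraction on mass.

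\textbf{What the paper actually does.} The paper separates the two issues cleanly. First, it runs a \emph{finite} rewriting trail of exactly $d$ steps (not $w$ steps): starting from $1=-S_\gamma^*(\gamma^{-1})+1$ it successively ``restores'' the coefficients $-a_1,\ldots,-a_d$ of $1-P(\gamma^{-1})$ at positions $1,\ldots,d$, each step adding a suitable multiple of $\gamma^{-q}S_\gamma^*(\gamma^{-1})$. After step $q$ the remainder has height at most $\lambda_q H+v_q$ with $\lambda_{q+1}=2\lambda_q+1$, $v_{q+1}=2v_q$, giving $\lambda_d=2^d-1$, $v_d=2^d$. The output is a polynomial $W(X)=\sum_{j=0}^{s-1}h_{d,j}X^{j+1}$ with $W(\gamma^{-1})=P(\gamma)$ and height $\le (2^d-1)H+2^d$. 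Second, periodicity is obtained by invoking Kala--V\'avra's theorem as a black box (your pigeonhole alternative is fine here). Third, the factor $2/3$ comes not from a geometric carry sum but from the identity $(1-X^r)W(X)=(1-X^r)R(X)+X^LT(X)$ at $X=\gamma^{-1}$: the left side has height $\le 2\cdot\bigl((2^d-1)H+2^d\bigr)$, while on the right at most three coefficients of $R,T\in\mathcal{A}[X]$ overlap at any position, whence $m=\lceil 2((2^d-1)H+2^d)/3\rceil$ suffices. Finally, the index $w$ in~(iii) is not chosen at the outset; it is deduced \emph{a posteriori} from $|P(\gamma)|<\eta$ via the parallel-addition estimates of Frougny--Pelantov\'a--Svobodov\'a (their Theorem~4 and Remarks~5--7), which give a constant $\kappa_{\gamma,\mathcal{A}}$ with $\gamma^{w-1}\ge \kappa_{\gamma,\mathcal{A}}/\eta$.

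In short: keep the rewriting idea, but run it for $d$ steps to land on a bounded-height $W$ of degree $<s$, apply Kala--V\'avra for periodicity, and derive $w$ at the end rather than imposing it at the start.
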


\begin{remark}
In Theorem \ref{mainthm}
the polynomial $S_{\gamma}(X)$ could have some zeroes
of modulus one. For instance, if
it is of the form $S_{\gamma}(X)=A(X) \times C(X)$
with $A(X)$ a product of cyclotomic polynomials
and $C(X)$ the minimal polynomial of
$\gamma$. The assumption that the conjugates
$\gamma'$ do not lie on the unit circle only
concerns the zeroes of $C(X)$.
\end{remark}

In Section \ref{S3.2} Theorem \ref{mainthm} is applied
to the Galois conjugation of eventually periodic
representations of 
polynomial values
of the base
$\gamma$ for $\gamma$ runing over a sequence
of real algebraic integers converging towards
a reciprocal algebraic integer $\beta > 1$. 
The consequences 
on the 
Galois conjugates of $\beta$ of modulus $< 1$ 
are investigated 
in the  
context
of 
automorphisms of complex numbers
(Kestelman \cite{kestelman},
Yales \cite{yales}); 
the absence of continuity of the 
$\qb$-automorphisms of conjugation
is compensated in some sense by
the eventual periodicity of the representations.
Proposition \ref{omegazerofbetaPbeta} 
reports some 
consequences on the relations
between the poles of the dynamical
zeta function $\zeta_{\beta}(z)$ of the
$\beta$-shift (see e.g. Solomyak
\cite{solomyak}) and the zeroes of the
minimal polynomial of $\beta$.
Examples of natural alphabets related to
sequences of polynomials of the 
class $\mathcal{B}$
are studied in
Section \ref{S3.3}, in terms of sequences
of Mahler measures.

\section{Natural alphabets in $(\beta, \mathcal{A})$-periodic representations of $\qb(\beta)$}
\label{S2}

Let $t \geq 1$. 
A polynomial $Q(X) =\sum_{i=0}^{d} a_i X^i
\in \zb[X]$ 
is said to have a {\it dominant coefficient},
resp. to be a {\it $t$-polynomial},
if there exists an integer  
$j \in \{0, 1, \ldots, d\}$ such that
$|a_j| > \sum_{i=0, i \neq j}^d |a_i|$,
resp.
$|a_j| > t \,\sum_{i=0, i \neq j}^d |a_i|$ .
Let $\beta$ be
an algebraic integer $> 1$ having no
conjugate on the unit circle.
If the ideal $(P_{\beta})$
$= P_{\beta}(X) \zb[X]$
generated by
the minimal polynomial 
$P_{\beta}(X)$
of $\beta$ contains a
$1$-polynomial
$\sum_{i=0}^{d} a_i X^i$
, of dominant coefficient
$a_j$, then, by Proposition 5.1 in
\cite{frougnypelantovasvobodova}
and Theorem 25 in
\cite{bakermasakovapelantovavavra}, 
the alphabet
\begin{equation}
\label{Alphabet}
\{-m , \ldots , 0,\ldots, m\}, \qquad
{\rm with} \qquad 
m:=\lceil\frac{|a_{j}|-1}{2}\rceil
+
\sum_{i=0, i \neq j}^{d} |a_i|,
\end{equation}
satisfies \eqref{QbetaPer}.
Here $\lceil \,\rceil$ denotes the upper integer part.
In Section \ref{S2.1} we recall
an effective construction of such a 
$1$-polynomial
in $(P_{\beta})$. The proof
of Proposition
\ref{poly_dominant} is reproduced from
\cite{frougnypelantovasvobodova} to fix the notations.
The way it is obtained
comes from a necessarily finite number of 
successive iterations of the companion
matrix of $P_{\beta}$.

\subsection{Pierce numbers of the base and integer polynomials with a dominant coefficient}
\label{S2.1}

\begin{proposition}
\label{poly_dominant}
Let $\alpha$ be an algebraic integer, of
degree $d$, $|\alpha|>1$,
of minimal polynomial
$P_{\alpha}(X) = \prod_{j=1}^{d} (X - \alpha_{(j)})$,
with $\alpha = \alpha_{(1)}$ and
$|\alpha_{(j)}|\neq 1$ for 
$j=2, 3, \ldots, d$. Denote
by $j_0$ the number of conjugates
$\alpha_{(j)}$ of $\alpha$ which have
a modulus $> 1$.
Then, for any $t \geq 1$, there exist
an integer $N$ and a polynomial
$$Q(X) = 
X^{d N} +
a_{1} X^{(d-1) N}
+ a_{2} X^{(d-2) N}
+\ldots
+a_{d-1} X^{N}
+ a_d \quad \in \zb[X]  
$$
such that
$Q(\alpha)= 0$, setting
$a_0 =1$, with
\begin{equation}
\label{dominantCOEFFineq}
|a_{j_0}|
> ~~
t \sum_{i \in \{0, 1, 2, \ldots, d\}
\setminus \{j_0\}} |a_i| .
\end{equation}
\end{proposition}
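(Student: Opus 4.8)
The plan is to track the powers of $\alpha$ modulo its minimal polynomial and exploit the fact that the conjugates of $\alpha$ split sharply by modulus into those $>1$ and those $<1$. First I would write, for each integer $N\ge 1$, the reduction of $X^{dN}$ in the quotient ring $\zb[X]/(P_\alpha)$, or equivalently express $\alpha^{dN}$ in the power basis; but a cleaner route is to form the $d\times d$ matrix $M_N$ whose columns are the coordinate vectors of $\alpha^{(d-1)N},\alpha^{(d-2)N},\dots,\alpha^{N},\alpha^{0}$ in some fixed basis of $\qb(\alpha)$, and take $Q$ to be (a suitable integer multiple/normalization of) the characteristic-type relation that expresses $\alpha^{dN}$ as an integer combination of these, i.e. $Q(X)=X^{dN}+a_1X^{(d-1)N}+\dots+a_d$ with $(a_1,\dots,a_d)$ the unique integer vector solving the resulting linear system. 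The existence of such integer $a_i$ with $a_0=1$ (monic) follows because $\{1,\alpha^N,\dots,\alpha^{(d-1)N}\}$ is a $\qb$-basis of $\qb(\alpha)$ for all $N\ge 1$ (since $\alpha^N$ still has degree $d$), so $\alpha^{dN}$ has a unique expansion, and integrality is inherited from $\alpha$ being an algebraic integer together with $\alpha^{dN}$ being integral over $\zb$ with monic relation of this shape — here one invokes that $P_\alpha$ divides $X^{dN}+a_1X^{(d-1)N}+\dots+a_d$ in $\zb[X]$, which is exactly the statement that all conjugates $\alpha_{(j)}$ are roots of $Q$.

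Next I would estimate the coefficients $a_i$ asymptotically in $N$. Since $Q(X)=\prod_{k}(X^d + b_1(k)X^{d-1}+\dots)$ — more precisely, because $Q(\alpha_{(j)})=0$ for every conjugate, the $a_i$ are (up to sign) the elementary symmetric functions of the quantities $\alpha_{(j)}^{N}$, $j=1,\dots,d$, arranged so that $a_i=(-1)^i e_i(\alpha_{(1)}^N,\dots,\alpha_{(d)}^N)$ after the substitution $Y=X^N$, $Q(X)=\prod_{j=1}^d(Y-\alpha_{(j)}^N)$. Thus $a_i=(-1)^i e_i(\alpha_{(1)}^N,\dots,\alpha_{(d)}^N)$. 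Now group the $\alpha_{(j)}$ into the $j_0$ conjugates of modulus $>1$ and the $d-j_0$ of modulus $<1$. As $N\to\infty$, $e_{j_0}(\alpha_{(1)}^N,\dots,\alpha_{(d)}^N)$ is dominated by the single product $\prod_{|\alpha_{(j)}|>1}\alpha_{(j)}^N$, whose modulus grows like $\rho^{N}$ with $\rho=\prod_{|\alpha_{(j)}|>1}|\alpha_{(j)}|>1$; meanwhile every $e_i$ with $i\ne j_0$ is $O(\rho^{(j_0-1)N}\cdot C)$ or smaller — any monomial in $e_i$ with $i<j_0$ omits at least one modulus-$>1$ conjugate, hence is $o(\rho^{N})$ relative to the leading term, and any monomial in $e_i$ with $i>j_0$ must include at least one modulus-$<1$ conjugate, so carries a factor tending to $0$ and is likewise negligible against $\rho^N$. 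Consequently $|a_{j_0}|/\bigl(\sum_{i\ne j_0}|a_i|\bigr)\to\infty$ as $N\to\infty$, so for any fixed $t\ge 1$ there is $N$ with \eqref{dominantCOEFFineq}.

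The main obstacle — the step I expect to require the most care — is making the two-sided asymptotic estimate fully rigorous and uniform. On the lower side one must be sure $|e_{j_0}(\alpha_{(1)}^N,\dots)|$ really is $\asymp\rho^N$ and does not suffer catastrophic cancellation among its several monomials of the same modulus order; this is where one uses that the $\alpha_{(j)}$ of modulus $>1$, being Galois conjugates of a fixed algebraic number, cannot all have arguments conspiring to cancel the dominant sum for infinitely many $N$ — a Skolem–Mahler–Lech / lower-bound-for-linear-forms flavored point, but in fact here it suffices that the single product $\prod_{|\alpha_{(j)}|>1}\alpha_{(j)}^N$ is itself one of the monomials and all other monomials of $e_{j_0}$ are strictly smaller in modulus (each omits a $>1$ conjugate and includes a $<1$ one), so the dominant monomial wins without any cancellation subtlety at all. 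Thus the estimate is genuinely elementary, and the only remaining bookkeeping is to confirm the integrality and the monic normalization $a_0=1$ of $Q$, i.e. that $Q=P_\alpha\cdot(\text{explicit monic integer polynomial})$ coming from the factorization $\prod_j(X^N-\alpha_{(j)})/P_\alpha(X^N)$ — equivalently that $\prod_j(X^N-\alpha_{(j)})$ is a monic integer polynomial (it is $\mathrm{Res}_Y(P_\alpha(Y),X^N-Y)$ up to sign, or simply the image of $P_\alpha$ under $X\mapsto X^N$-type substitution composed with taking norms) divisible by $P_\alpha(X)$. Writing $a_0=1$ then amounts to reading off the leading coefficient, which is $1$ by construction.
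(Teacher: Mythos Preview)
Your argument is essentially the paper's: both take $Q(X)=\prod_{j=1}^d(X^N-\alpha_{(j)}^N)$, read off $a_i=(-1)^i e_i(\alpha_{(1)}^N,\ldots,\alpha_{(d)}^N)$, and show $|a_{j_0}|\big/\sum_{i\ne j_0}|a_i|\to\infty$ by noting that the single monomial $\prod_{|\alpha_{(j)}|>1}\alpha_{(j)}^N$ strictly dominates every other monomial appearing in any $e_i$. The paper packages integrality of the $a_i$ via the companion matrix (writing $\prod_j(Y-\alpha_{(j)}^N)=(-1)^d\det(H^N-Y\mathrm{I})$), whereas you invoke symmetric functions and the resultant description; these are equivalent.

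One correction: the claim in your first paragraph that ``$\alpha^N$ still has degree $d$'' is false in general (take $\alpha=\sqrt{2}$, $N=2$; both hypotheses $|\alpha|>1$ and no conjugate on the unit circle hold, yet $\alpha^2=2$ has degree $1$), so the ``unique integer vector solving the linear system'' framing there is unsound. Fortunately this is not load-bearing --- your second paragraph abandons that route and gives the explicit symmetric-function formula for the $a_i$, from which integrality and the asymptotics follow exactly as you say, so the proof stands once the first-paragraph detour is deleted.
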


\begin{proof}
We have $j_0 \geq 1$. 
The minimal
polynomial 
$$P_{\alpha}(X) = \prod_{j=1}^{d} (X - \alpha_{(j)})
=
X^d + g_1 X^{d-1}
+ g_2 X^{d-2}
+\ldots
+
g_{d-1} X + g_d \in \zb[X]$$ 
can be written as the characteristic polynomial
of $\alpha$,
from the 
companion matrix \cite{lancaster}
$$
H = \left(
\begin{array}{cccccc}
0&0& &\ldots&0&-g_d\\
1&0&0&\ldots&0&-g_{d-1}\\
0&1&0&\ldots&0&-g_{d-2}\\
\vdots&&&&&\\
\vdots &&&1&0&-g_{2}\\
0&\ldots&0&&1&-g_{1}
\end{array}
\right).
$$
We have: $\det(H - X \,{\rm I}_d) = 
(-1)^d P_{\alpha}(X)$,
where I$_d$ is the identity matrix.
The eigenvalues of $H$ are
the zeroes of $P_{\alpha}(X)$.
For $n \geq 2$ let us define
$$P_{\alpha, n}(X) :=
(-1)^d
\det(H^n - X \,{\rm I}_d) \in \zb[X].$$
The polynomial $P_{\alpha, n}(X)$,
of degree $d$, has integer coefficients
$$P_{\alpha, n}(X) = 
\prod_{j=1}^{d} (X - \alpha_{(j)}^{n})
=
X^d + g_{1}(n) X^{d-1}
+ g_{2}(n) X^{d-2}
+\ldots
+
g_{d-1}(n) X + g_{d}(n).$$ 
We set: $g_j = g_{j}(1)$ for $j=1, 2, \ldots, n$
and $g_0 = g_{0}(n) =1$
for $n \geq 1$.
The coefficients $g_{j}(n)$ are related to the symmetric
functions of the roots.
Without loss of generality, let us assume:
$$|\alpha_{(1)}| \geq
|\alpha_{(2)}| \geq
\ldots
\geq
|\alpha_{(j_0)}|
>
|\alpha_{(j_0 + 1)}|
\geq
\ldots
\geq
|\alpha_{(d)}|,
$$
where $j_0:=\max\{i : 1 < |\alpha_{(i)}|\}$.
The choice of $j_0$ guarantees
$$\left|
\frac{\alpha_{(i_1)} \alpha_{(i_2)} \ldots 
\alpha_{(i_r)}}{\alpha_{(1)} \alpha_{(2)} \ldots 
\alpha_{(j_0)}}
\right| < 1$$
for any subset
$\{i_1 , i_2 , \ldots , i_r \} \subset 
\{1, 2, \ldots , d\}$
and
$\{i_1 , i_2 , \ldots , i_r \} \neq 
\{1, 2, \ldots , j_0\}$.
Then, for all choices
of
$\{i_1 , i_2 , \ldots , i_r \} \neq 
\{1, 2, \ldots , j_0\}$, we have:
$$\lim_{n \to \infty}\,
\frac{\alpha_{(i_1)}^n \alpha_{(i_2)}^n \ldots 
\alpha_{(i_r)}^n}{\alpha_{(1)}^n \alpha_{(2)}^n \ldots 
\alpha_{(j_0)}^n} = 0.$$
Now, for all $n \geq 1, 1 \leq j \leq d$, we have:
$$
g_{j}(n)
=
\sum_{\{i_1 , i_2 , \ldots, i_j\} \in S_j}
\alpha_{(i_1)}^n \alpha_{(i_2)}^n \ldots 
\alpha_{(i_j)}^n
$$
where $S_j =
\{\mathcal{P} \subset \{1, 2, \ldots, d\} :
\# \mathcal{P} = j\}$ 
is the set of all subsets
of 
$\{1, 2, \ldots, d\}$ with cardinality $j$.
Since
$$\lim_{n \to \infty}\, 
\frac{g_{j}(n)}
{\alpha_{(1)}^n \alpha_{(2)}^n \ldots 
\alpha_{(j_0)}^n} = \left\{
\begin{array}{cc}
0& {\rm for ~all}~ 
j = 1, 2, \ldots, d {\rm ~~and~} j \neq j_0\\
(-1)^j& {\rm for~} j = j_0 
\end{array}
\right. ,$$
we deduce that, for any $t > 0$, there exists
an integer $N = N(t)$ such that
$$
\frac{|g_{j_0}(N)|}
{|\alpha_{(1)}^N \alpha_{(2)}^N \ldots 
\alpha_{(j_0)}^N |}~~
> ~~~~t \!\!\!\!\sum_{j \in \{0, 1, 2, \ldots , d\}, j \neq j_0}
\frac{|g_{j}(N)|}
{|\alpha_{(1)}^N \alpha_{(2)}^N \ldots 
\alpha_{(j_0)}^N |}
$$
equivalently
\begin{equation}
\label{001condition}
|g_{j_0}(N)|
> ~~t \sum_{j \in \{0, 1, 2, \ldots , d\}, j \neq j_0}
|g_{j}(N)|.
\end{equation}
This inequality gives the result 
\eqref{dominantCOEFFineq},
with 
$Q(X) = P_{\alpha, N}(X^N)$. 
\end{proof}
\begin{definition}
The smallest integer $N = N(t)$ for which 
\eqref{001condition}
is satisfied is called the 
{\it dominance index of $P_{\alpha}$
(or of $\alpha$)
for the value $t \geq 1$}. For $t=1$,
$N(1)$ is called the {\it dominance index of
$P_{\alpha}$} (or of $\alpha$).
\end{definition}

\begin{definition}
Let $\alpha > 1$ be a real algebraic integer.
With the same notations as in 
Proposition \ref{poly_dominant} and its proof,
the alphabet 
$:=
\{-m , \ldots , 0,\ldots, m\}$, 
with
$$m :=\lceil\frac{|g_{j_0}(N)|-1}{2}\rceil
+
\sum_{j=0, j \neq j_0}^{d} |g_{j}(N)|
$$
and $N$ the dominance index of
$\alpha$, is called
the {\it natural alphabet} of $\alpha$,
and denoted by
$\mathcal{A}^{(nat)}_{\alpha}$.
\end{definition}

For $\alpha > 1$ any real algebraic integer,
the existence of the natural alphabet
$\mathcal{A}^{(nat)}_{\alpha}$ implies
that $\alpha$ satisfies the 
{\it weak representation of zero property},
or, for short, $\alpha$ is {\it WRZ},
in the terminology of 
\cite{frougnypelantovasvobodova}.
Then, in the Sections 4 and 5 in
\cite{frougnypelantovasvobodova},
Frougny, Pelantova and Svobodova
provide a parallel algorithm
``Algorithm II" 
which
gives access to  \eqref{QbetaPer}.

\begin{proposition}
\label{pierceminorant}
Let $\alpha > 1$ be a real algebraic integer.
With the same notations as in 
Proposition \ref{poly_dominant} and its proof,
with $N=N(t)$ the smallest value
which satisfies \eqref{001condition},
we have:
\begin{equation}
\label{002condition}
|g_{j_0}(N)|
> \frac{t}{1+t} \Delta_{N}(\alpha),
\end{equation}
where
$|P_{\alpha,N}(1)| = \Delta_{N}(\alpha)
=
\bigl|
\prod_{j=1}^{d} (1 - \alpha_{(j)}^{N})
\bigr|
$ 
is the $N$-th Pierce number
of $\alpha$,
and
the natural alphabet 
$\mathcal{A}^{(nat)}_{\alpha}
=
\{-m , \ldots , 0,\ldots, m\}$
is such that
$$m 
\geq
\lceil 2^{-1} (2^{-1} \Delta_{N}(\alpha)-1)\rceil .
$$
\end{proposition}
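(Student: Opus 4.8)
The plan is to unwind the definitions and estimate $\Delta_N(\alpha) = |P_{\alpha,N}(1)| = \bigl|\prod_{j=1}^d (1-\alpha_{(j)}^N)\bigr|$ against the quantities $g_j(N)$ already appearing in the proof of Proposition~\ref{poly_dominant}. First I would observe that, by definition of $P_{\alpha,N}$ as $\prod_{j=1}^d(X-\alpha_{(j)}^N) = X^d + g_1(N)X^{d-1} + \ldots + g_d(N)$, evaluating at $X=1$ gives $P_{\alpha,N}(1) = 1 + g_1(N) + g_2(N) + \ldots + g_d(N) = \sum_{j=0}^d g_j(N)$ with the convention $g_0(N)=1$. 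Hence
\begin{equation}
\label{triangleDelta}
\Delta_N(\alpha) = |P_{\alpha,N}(1)| = \Bigl| \sum_{j=0}^d g_j(N) \Bigr| \leq \sum_{j=0}^d |g_j(N)| = |g_{j_0}(N)| + \sum_{\substack{j=0 \\ j \neq j_0}}^d |g_j(N)|.
\end{equation}

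Next I would feed in the defining inequality \eqref{001condition} of the dominance index $N=N(t)$, namely $|g_{j_0}(N)| > t \sum_{j \neq j_0} |g_j(N)|$, which rearranges to $\sum_{j\neq j_0}|g_j(N)| < \tfrac{1}{t}|g_{j_0}(N)|$. Substituting into \eqref{triangleDelta} yields $\Delta_N(\alpha) < |g_{j_0}(N)| + \tfrac1t |g_{j_0}(N)| = \tfrac{1+t}{t}|g_{j_0}(N)|$, which is exactly \eqref{002condition}. For the lower bound on $m$, I would recall that by definition $m = \lceil \tfrac{|g_{j_0}(N)|-1}{2}\rceil + \sum_{j\neq j_0}|g_j(N)| \geq \lceil \tfrac{|g_{j_0}(N)|-1}{2}\rceil$; then combine with the just-proved inequality $|g_{j_0}(N)| > \tfrac{t}{1+t}\Delta_N(\alpha)$ specialized to $t=1$ (so that $N$ is the dominance index of $\alpha$ in the sense of the definition preceding the proposition), giving $|g_{j_0}(N)| > \tfrac12 \Delta_N(\alpha)$, and hence $m \geq \lceil \tfrac{|g_{j_0}(N)|-1}{2}\rceil \geq \lceil \tfrac12(\tfrac12\Delta_N(\alpha)-1)\rceil = \lceil 2^{-1}(2^{-1}\Delta_N(\alpha)-1)\rceil$, using monotonicity of $x \mapsto \lceil x \rceil$ and of $x \mapsto (x-1)/2$.

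There is essentially no serious obstacle here; the argument is a two-line rearrangement plus a ceiling-monotonicity remark. The one point that needs a little care is the interplay between the general parameter $t$ in \eqref{002condition} and the value $t=1$ used for the statement about $m$: the proposition as phrased carries a floating $t$ in the first inequality but pins down $m$ via the dominance index, i.e.\ via $t=1$, so I would make explicit that the bound on $m$ uses the $t=1$ instance of \eqref{002condition} while \eqref{002condition} itself is stated for the general $N=N(t)$. A second minor point is that the integrality of $g_{j_0}(N)$ (so that $|g_{j_0}(N)| \geq 1$ whenever it is nonzero, which holds since $\alpha$ is an algebraic integer with no conjugate of modulus $1$, forcing $g_{j_0}(N)\neq 0$) legitimizes writing $\lceil \tfrac{|g_{j_0}(N)|-1}{2}\rceil$ and keeps the chain of inequalities between integers well defined. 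Once these bookkeeping items are stated, both displayed inequalities of Proposition~\ref{pierceminorant} follow immediately.
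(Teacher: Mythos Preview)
Your proof is correct and follows essentially the same approach as the paper: both arguments reduce to the single triangle-inequality estimate $\Delta_N(\alpha)=\bigl|\sum_{j=0}^d g_j(N)\bigr|\le |g_{j_0}(N)|+\sum_{j\neq j_0}|g_j(N)|$, combined with \eqref{001condition}, and then specialize to $t=1$ for the bound on $m$. The paper phrases the first step via the reverse triangle inequality $\sum_{j\neq j_0}|g_j(N)|\geq \Delta_N(\alpha)-|g_{j_0}(N)|$ rather than your direct form, and leaves the ceiling-monotonicity verification implicit, but the content is identical.
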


\begin{proof}
In the continuation of \eqref{001condition}, 
we have
$$\sum_{j \in \{0, 1, 2, \ldots , d\}, j \neq j_0}
|g_{j}(N)| \geq 
\Bigl|\sum_{j=0}^{d} g_{j}(N) - g_{j_0}(N)\Bigr|
=\Bigl|P_{\alpha,N}(1) - g_{j_0}(N)\Bigr|$$
$$\geq \Bigl||P_{\alpha,N}(1)| - |g_{j_0}(N)|\Bigr|
\geq
\Delta_{N}(\alpha) - |g_{j_0}(N)| .
$$
Therefore
\begin{equation}
\label{001conditionPierce}
|g_{j_0}(N)|
> ~~t \,
 \Delta_{N}(\alpha) -  t \,|g_{j_0}(N)|,
\end{equation}
equivalently
\eqref{002condition}.
We now take $t=1$, $N=N(1)$ the dominance index of $\alpha$,
and apply Proposition 5.1 in
\cite{frougnypelantovasvobodova}
and Theorem 25 in
\cite{bakermasakovapelantovavavra}.
\end{proof}

\begin{remark}
\label{remarkSysDyn02}
To each polynomial of the form 
$P_{\alpha}$ as in Proposition \ref{poly_dominant}
 there is an associated endomorphism 
 ${\rm T}_{P_{\alpha}}$ of the
$d$-torus, given by the natural action of the companion matrix of $P_{\alpha}$. 
${\rm T}_{P_{\alpha}}$
is an ergodic transformation 
with respect to Lebesgue measure, and
$\Delta_{N}(P_{\alpha})$ is the number 
of points of period N under ${\rm T}_{P_{\alpha}}$
\cite{everestward}. 
The Mahler measure 
${\rm M}(\alpha)=
\prod_{i=1}^{d}
\max\{1, |\alpha_{(i)}|\}$
of $\alpha$ is related to
the dynamical properties of the 
corresponding toral endomorphism.
The condition of having no root
on the unit circle implies
expansiveness of ${\rm T}_{P_{\alpha}}$ 
as a topological
dynamical system.
The topological entropy of 
${\rm T}_{P_{\alpha}}$
is equal to $\lo {\rm M}(\alpha)$
\cite{lind}. 
\end{remark}

\begin{remark}
The link between the natural alphabet
$\mathcal{A}^{(nat)}_{\beta}$
and the Mahler measure
${\rm M}(\alpha)$
of the base of numeration $\alpha$
naturally comes from Proposition
\ref{poly_dominant} where $j_0$
counts the number of roots 
outside the closed unit disk.
It can be estimated roughly as follows:
first the $N$-th Pierce number
of $\alpha$ is
$$
\Delta_{N}(\alpha) =
\frac{\Delta_{N}(\alpha)}{\Delta_{N-1}(\alpha)}
\times
\frac{\Delta_{N-1}(\alpha)}{\Delta_{N-2}(\alpha)}
\times
\ldots
\times
\frac{\Delta_{2}(\alpha)}{\Delta_{1}(\alpha)}
\Delta_{1}(\alpha),
$$
with $\Delta_{1}(\alpha) = |P_{\alpha}(1)|$.
From Lehmer \cite{lehmer}, 
 $${\rm M}(\alpha) =
\lim_{q \to \infty} \frac{\Delta_{q+1}(\alpha)}
{\Delta_{q}(\alpha)},$$
we deduce,
without taking into account the type of convergence
towards ${\rm M}(\alpha)$, as a rough estimate 
for the lower bound,
\begin{equation}
\label{002conditionMahler}
|g_{j_0}(N)|
> \frac{1}{2} {\rm M}(\alpha)^{N-1}
|P_{\alpha}(1)|,
\end{equation}
and the approximate lower bound 
$\lceil 2^{-1} (2^{-1}{\rm M}(\alpha)^{N-1}
|P_{\alpha}(1)| -1)\rceil$
for $m$. Let us note that
the sequence $(\Delta_{n}(\alpha))_n$ is fairly chaotic,
as the sequence
of the Pierce numbers of $\alpha$,
from the heuristics of 
Einsiedler, Everest and Ward
\cite{einsiedlereverestward}.
\end{remark}

\subsection{Natural alphabets for a convergent sequence of Pisot numbers}
\label{S2.2}

In this paragraph we examplify the
hugeness of the natural alphabets for a sequence
of Pisot numbers.

Let us consider the sequence
of irreducible integer polynomials (from Theorem 7.2.1
in \cite{bertinetal})
$$P_{2 k}(z)
=
(1 - z^{2 k} (1+z-z^2))/(1-z), \qquad k \geq 1.$$
The dominant root of $P_{2 k}(z)$ is denoted
$\beta_{k} > 1$. All the other roots
have a modulus $< 1$.
For all $k \geq 1$, 
we have: 
$\beta_{k} < (1+\sqrt{5})/2$.
The sequence 
$(\beta_{k})_{k \geq 1}$ is an increasing
sequence of Pisot numbers, with limit:
$\lim_{k \to \infty}
\beta_{k} = \frac{1+\sqrt{5}}{2}$.
For $k=1$ we have
$P_{2}(z) = z^3 - z -1$ and $\beta_1$ is 
the smallest Pisot number.
Let $\tau = \beta_{\infty} 
= (1+\sqrt{5})/2$.
It is the dominant root of the trinomial
$z^2 - z -1$. 

The dominance index of
$\tau$ is 3, and the
natural alphabet
$\mathcal{A}^{(nat)}_{\tau}$
is 
$=
\{-3, \ldots, +3\}
$.
The growth rate of the natural alphabet
$\mathcal{A}^{(nat)}_{\beta_k}
=
\{-m_{k} , \ldots, m_{k}\}
$ is represented as a function of $k$
in Figure \ref{growthpisotMAXalphabets}.
\begin{figure}
\begin{center}
\includegraphics[width=9cm]{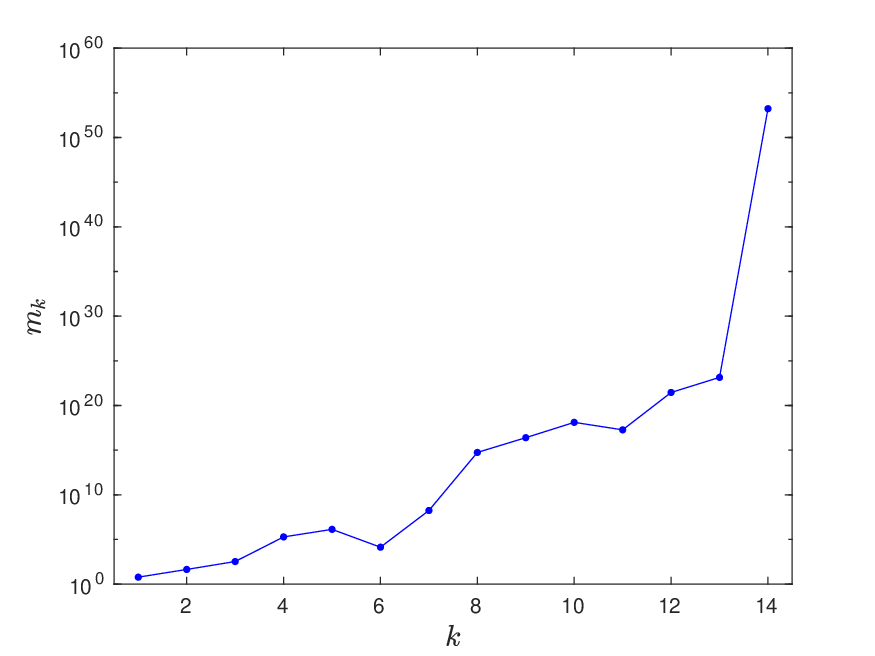}
\end{center}
\caption{Natural alphabets of the Pisot numbers
$\beta_k$.}
\label{growthpisotMAXalphabets}
\end{figure}

\section{Small heights and eventually periodic representations of polynomial values of the base}
\label{S3}

\subsection{Rewriting trails, intermediate alphabets - Proof of Theorem \ref{mainthm}}
\label{S3.1}

Denote by
$S_{\gamma}^{*}(X)=X^s S_{\gamma}(1/X)
=
1 -t_{1} X -t_2 X^2 - \ldots - t_{s-1} X^{s-1} 
-t_s X^s $
the reciprocal polynomial of
$S_{\gamma}(X)= X^s -
\sum_{i=0}^{s-1} t_{s-i} X^i$. The coefficients
$t_i$ are in $\{-1,0,+1\}$.
The algebraic integer $\gamma$ is called the
{\it base} and $S_{\gamma}^{*}(\gamma^{-1})=0$.

We want to express
$P(\gamma)$ 
as a $(\gamma, \mathcal{A})$- 
eventually
periodic representation
with a certain alphabet $\mathcal{A}$
to be defined.
This objective means that, first,
we have to express $P(\gamma)$
as a Laurent series of $1/\gamma$.

We now introduce a construction,
that we call ``rewriting trail
from ``$S_{\gamma}^{*}$"
to ``$P$", at $\gamma^{-1}$", 
to reach this objective, and which will
allow us to
show that a symmetrical alphabet 
$\mathcal{A} =\{-m, \ldots, 0, \ldots, m\}$
can be defined and
is such that $m$
depends upon
$H$ and $\deg(P)$, independently of 
$s$ and $\gamma$.

The starting point is the identity
$1 = 1$, to which we add 
$0= - S_{\gamma}^{*}(\gamma^{-1})$
in the right hand side.
Then we define a rewriting trail from
\begin{equation}
\label{Sgamma}
1=1-S_{\gamma}^{*}(\gamma^{-1})
=
t_{1}\gamma^{-1}  +t_2 \gamma^{-2} + \ldots 
+ t_{s-1} \gamma^{-(s-1)} +t_s \gamma^{-s}
\end{equation}
to
$$
- a_1 \gamma^{-1} - a_2 \gamma^{-2} + \ldots - 
a_{d-1} \gamma^{-(d-1)} - a_d \gamma^{-d}
= 1 - P(\gamma^{-1}).
$$
A rewriting trail will be a
sequence of integer polynomials, whose
role will
consist in ``restoring" the coefficients
of $1 - P(\gamma^{-1})$ one after the other, 
from the left,
by adding ``$0$" conveniently at each step
to both sides of \eqref{Sgamma}.
At the first step we add $0=
(-a_1 - t_1) \gamma^{-1} S_{\gamma}^{*}(\gamma^{-1})$;
and we obtain 
$$1= -a_1 \gamma^{-1}$$
$$+(-(-a_1 -t_1) t_1 + t_2) \gamma^{-2}
+(-(-a_1 -t_1) t_2 + t_3) \gamma^{-3} + \ldots
$$
so that the height of the polynomial
$$(-(-a_1 -t_1) t_1 + t_2) X^{2}
+(-(-a_1 -t_1) t_2 + t_3) X^{3} + \ldots
$$
is $\leq H+2$.
At the second step we add
$0=
(-a_2 - (-(-a_1 -t_1) t_1 + t_2)) \gamma^{-2} S_{\gamma}^{*}(\gamma^{-1})
$.
Then we obtain
$$1= -a_1 \gamma^{-1} - a_2 \gamma^{-2}$$
$$+
[(-a_2 - (-(-a_1 -t_1) t_1 + t_2))t_1
+ (-(-a_1 -t_1) t_2 + t_3)] \gamma^{-3}+\ldots
$$
where the height of the polynomial
$$[(-a_2 - (-(-a_1 -t_1) t_1 + t_2))t_1
+ (-(-a_1 -t_1) t_2 + t_3)] X^{3}+\ldots
$$
is $\leq H + (H+2)+(H+2)=3 H+4$.
Iterating this process $d$ times 
we obtain
$$1= -a_1 \gamma^{-1} - a_2 \gamma^{-2} -\ldots
- a_d \gamma^{-d}$$
$$+~~
polynomial ~~remainder~~ in~~ \gamma^{-1}.
$$
Denote by $V(\gamma^{-1})$
this polynomial remainder in $\gamma^{-1}$,
for some $V(X) \in \zb[X]$,
and $X$ specializing in $\gamma^{-1}$.
If we denote the upper bound of the
height of the polynomial remainder
$V(X)$, 
at step $q$, by $\lambda_q H + v_q$, 
we readily
deduce: $v_q = 2^q$, and
$\lambda_{q+1} = 2 \lambda_{q} +1$, $q \geq 1$,
with $\lambda_1 = 1$; then 
$\lambda_{q} = 2^{q}-1$.

To summarize,
we obtain a sequence
$(A'_{q}(X))_{q \geq 1}$ of rewriting polynomials
involved in this rewriting trail;
for $q \geq 1$, $A'_{q} \in \zb[X]$,
$\deg(A'_{q}) \leq q$
and $A'_{q}(0)= -1$. 
The first polynomial
$A'_{1}(X)$ is 
$-1 + (-a_1 - t_1) X$. The second 
polynomial
$A'_{2}(X)$ is 
$-1 + (-a_1 - t_1) X +
(-a_2 - (-(-a_1 -t_1) t_1 + t_2)) X^2$,
etc.

For $q \geq \deg(P)$, all the coefficients 
of $P$ are restored; denote
by
$(h'_{q,j})_{j=0,1,\ldots,s-1}$ the $s$-tuple
of integers produced by this rewriting trail,
at step $q$. It is such that
\begin{equation}
\label{AprimeSPreste}
A'_{q}(\gamma^{-1}) S_{\gamma}^{*}(\gamma^{-1})
=
-P(\gamma^{-1}) + \gamma^{-q-1}
\Bigl(
\sum_{j=0}^{s-1} h_{q,j} \gamma^{-j}
\Bigr).
\end{equation}
Then take $q=d$.
The lhs of \eqref{AprimeSPreste} is equal to 
$0$.
Thus 
$$P(\gamma^{-1}) =
 \gamma^{-d-1}
\Bigl(
\sum_{j=0}^{s-1} h_{d,j} \gamma^{-j}
\Bigr)
\qquad
\Longrightarrow
\qquad
P(\gamma) =
\sum_{j=0}^{s-1} h_{d,j} \gamma^{-j-1}.
$$
The height
of the polynomial
$W(X) :=\sum_{j=0}^{s-1} h_{d,j} X^{j+1}$ is $\leq$
 $(2^d -1) H + 2^d$.
We now assume
$|P(\gamma)| < \eta$.
By Kala-Vavra's Theorem \ref{kalavavra}
there exist an alphabet
$\mathcal{A} \subset \zb$,
a preperiod $R(X) \in \mathcal{A}[X]$, 
a period
$T(X) \in \mathcal{A}[X]$ such that
$$W(\gamma^{-1})=P(\gamma) = R(\gamma^{-1})
+ \gamma^{-\deg R -1} 
\sum_{j=0}^{\infty} 
\frac{1}{\gamma^{j (\deg T + 1)}}
T(\gamma^{-1})
$$
Since the relation
$S_{\gamma}^{*}(\gamma^{-1})
=
1 -t_{1} \gamma^{-1} -t_2 \gamma^{-2} - \ldots - t_{s-1} \gamma^{-s+1} 
-t_s \gamma^{-s} = 0$ holds, we may assume
$\deg R \leq s-1$,
$\deg T \leq s-1$.
Then, for $X$ specialized at $\gamma^{-1}$,
we have the identity
\begin{equation}
\label{Wrepresentation}
W(X) = R(X) +
X^{L}\frac{T(X)}{1-X^r}
\end{equation}
for some positive integers $L, r$.
The height of $(1-X^r) W(X)$ is 
$\leq 2 ((2^d -1) H + 2^d)$
and, with 
$\mathcal{A}$
assumed $=\{-m, \ldots,0,\ldots,+m\}$,
the height of 
$(1-X^r) R(X) + X^L T(X)$ is less than
$3 m$. Therefore $m$ is 
$\leq 2 ((2^d -1) H + 2^d)/3$.
We can take 
$m = \lceil 2((2^d -1) H + 2^d)/3 \rceil$.

Since the algebraic norm
N$(\gamma)$ is equal to
$\pm 1$ we cannot expect the uniqueness
of the representations \eqref{Wrepresentation},
for $X=\gamma^{-1}$, by
\cite{kovacskornyei}.
However, for any 
$(\gamma, \mathcal{A})$-eventually periodic
representation
of $P(\gamma)$
$$P(\gamma)= W(\gamma^{-1})
=
\frac{a'_{w}}{\gamma^{w}}
+\frac{a'_{w+1}}{\gamma^{w+1}}
+\frac{a'_{w+2}}{\gamma^{w+2}}
+\ldots,\qquad {\rm with}~
|a'_{j}| \leq m, j=w, w+1, \ldots$$
with $\,a'_{w} \neq 0$,
the exponent $w$ appearing
in the first term  tends to infinity if
$\eta$ tends to $0$.
Indeed, from Theorem 4, Remarks 5 to 7,
in \cite{bakermasakovapelantovavavra},
there exists a positive real
number $\kappa_{\gamma, \mathcal{A}} > 0$
such that $w$ is the minimal integer 
such that
$$\gamma^{w-1} \geq \frac{\kappa_{\gamma, \mathcal{A}}}
{|P(\gamma)|} \geq \frac{\kappa_{\gamma, \mathcal{A}}}
{\eta}.$$

\subsection{Application to Galois conjugation: convergence and eventually periodic representations along a sequence of almost Newman polynomials}
\label{S3.2}

Newman polynomials are polynomials with coefficients
in $\{0,1\}$. In \cite{dutykhvergergaugry}
almost Newman polynomials have been introduced:
an almost Newman polynomial is an integer polynomial
which has coefficients
in $\{0,1\}$ except the constant term equal to $-1$.

\begin{definition}
\label{almostNewmandefinition}
The collection of lacunary almost Newman  
polynomials
of the type:
$$f(x) := -1 + x + x^n +
x^{m_1} + x^{m_2} + \ldots + x^{m_s}$$
where $n \geq 2$, 
$s \geq 0$, 
$m_1 - n \geq n-1$, 
$m_{q+1}-m_q \geq n-1$ for $1 \leq q < s$,
is called
the {\it class
$\mathcal{B}$}. 
The case ``$s=0$" corresponds to 
the trinomials $G_{n}(z) :=
-1+z+z^n$. 
The subclass $\mathcal{B}_n
\subset \mathcal{B}$ is the
set of
polynomials
$f(x) \in \mathcal{B}$
whose third monomial is exactly
$x^n$, so that the union
$\mathcal{B} =
\cup_{n \geq 2} \mathcal{B}_n $
is disjoint.
\end{definition}

The ``Asymptotic Reducibility
Conjecture", formulated in 
\cite{dutykhvergergaugry}, says
that
$75\%$ of the polynomials $f(x) \in \mathcal{B}$ 
are irreducible. 
The factorization and the zeroes of
the polynomials of the class
$\mathcal{B}_n$, $n \geq 2$, have been
studied in \cite{dutykhvergergaugry}.

\begin{theorem}[Selmer \cite{selmer}]
Let $n \geq 2$. The trinomials $G_{n}(x)$ are irreducible
if $n \not\equiv 5 ~({\rm mod}~ 6)$, and, for
$n \equiv 5 ~({\rm mod}~ 6)$, are reducible as 
product of two irreducible factors whose
one is the cyclotomic factor $x^2 -x +1$,
the other factor 
$(-1 + x + x^n)/(x^2 - x +1)$
being nonreciprocal of degree
$n-2$.
\end{theorem}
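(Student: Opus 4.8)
The plan is to prove the two assertions separately, using elementary algebraic and arithmetic arguments together with a classical irreducibility criterion for trinomials.

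First I would handle the reducible case $n \equiv 5 \pmod 6$. Here one checks directly that $x^2 - x + 1$ divides $G_n(x) = -1 + x + x^n$. The roots of $x^2 - x + 1$ are the primitive sixth roots of unity $\zeta$, $\bar\zeta$, satisfying $\zeta^6 = 1$ and $\zeta^2 = \zeta - 1$, equivalently $\zeta + \zeta^{-1} = 1$. Evaluating: $G_n(\zeta) = -1 + \zeta + \zeta^n$, and since $n \equiv 5 \pmod 6$ we have $\zeta^n = \zeta^5 = \zeta^{-1} = 1 - \zeta$, so $G_n(\zeta) = -1 + \zeta + 1 - \zeta = 0$. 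Hence $x^2 - x + 1 \mid G_n(x)$, and the complementary factor $(-1+x+x^n)/(x^2-x+1)$ has degree $n - 2$. One then verifies it is nonreciprocal: its constant term is $G_n(0)/1 = -1$ divided appropriately — more carefully, compare the constant and leading coefficients of the quotient; the leading coefficient is $1$ while the constant coefficient is $-1$ (since $x^2-x+1$ has constant term $1$ and $G_n$ has constant term $-1$), so the quotient cannot be $\pm$ its own reciprocal. Irreducibility of this degree $n-2$ factor would follow from the same trinomial criterion invoked below, applied after this single cyclotomic factor is stripped off; alternatively one appeals directly to Selmer's original analysis.

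Second, and this is the main case, I would show $G_n(x)$ is irreducible when $n \not\equiv 5 \pmod 6$. The strategy is: (a) show $G_n$ has no cyclotomic factor, and (b) invoke the classical result (Ljunggren, refining Selmer) that a trinomial $x^n + \varepsilon_1 x^m + \varepsilon_0$ with $\varepsilon_i \in \{\pm 1\}$ factors only through cyclotomic pieces of a restricted shape, so that absence of cyclotomic factors forces irreducibility up to the explicit exceptional families. For step (a): any cyclotomic factor would have all roots on the unit circle; if $\zeta$ is a root of unity with $G_n(\zeta) = 0$ then $\zeta + \zeta^n = 1$, and taking this with its complex conjugate $\zeta^{-1} + \zeta^{-n} = 1$ one derives, after manipulation, that $\zeta$ must be a primitive sixth root of unity (the only root of unity $\zeta$ for which $\zeta + \zeta^n$ can equal $1$ forces $|\zeta| = 1$ and a short case analysis on $\zeta^{n-1}$). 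This in turn forces $n \equiv 5 \pmod 6$ as computed above, contradicting our hypothesis. Hence $G_n$ has no root of unity among its zeroes, so no cyclotomic factor.

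The hard part is step (b): controlling all possible factorizations of the trinomial, not merely ruling out cyclotomic factors. The cleanest route is to cite Ljunggren's theorem on trinomials $x^n \pm x^m \pm 1$: such a trinomial is either irreducible or has exactly one irreducible non-cyclotomic factor, with the cyclotomic part dividing $x^{2k}+x^k+1$ or $x^2+x+1$ and occurring only in congruence-restricted cases. Combined with step (a), this immediately yields irreducibility for $n \not\equiv 5 \pmod 6$, and for $n \equiv 5 \pmod 6$ it pins down the factorization as exactly cyclotomic factor $x^2-x+1$ times one irreducible nonreciprocal factor of degree $n-2$. If one wishes to avoid citing Ljunggren, the self-contained alternative is the Schur-style argument bounding the number of irreducible factors of a trinomial by analyzing the Newton polygon / resultant with $x^n - 1$, but this is substantially longer; I would present the proof via the Ljunggren citation and relegate the elementary cyclotomic computation above to the main text, since that is the only part genuinely needed beyond the classical literature.
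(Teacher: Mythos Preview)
The paper does not prove this statement; it is quoted verbatim as a result of Selmer \cite{selmer} and used as background for the class $\mathcal{B}$. There is therefore no ``paper's own proof'' to compare against.

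Your sketch is essentially correct as a reconstruction of the classical argument. The computation in the reducible case is right: if $\zeta$ is a primitive sixth root of unity then $\zeta^5 = \zeta^{-1} = 1-\zeta$, so $G_n(\zeta)=0$ exactly when $n\equiv 5\pmod 6$. Your step (a) in the irreducible case can be made fully rigorous in one line: if $|\zeta|=|\zeta^n|=1$ and $\zeta+\zeta^n=1$, then geometrically two unit vectors sum to $1$, which forces $\{\zeta,\zeta^n\}=\{e^{i\pi/3},e^{-i\pi/3}\}$, hence $\zeta$ is a primitive sixth root and $\zeta^n=\zeta^5$, giving $n\equiv 5\pmod 6$. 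For step (b) you invoke Ljunggren's trinomial theorem; this is logically fine but somewhat circular historically, since Ljunggren's 1960 paper extends Selmer's 1956 result rather than preceding it. If you want a self-contained argument in the spirit of Selmer's original, the key extra ingredient is to show that any nontrivial factorization of $G_n$ must contain a factor all of whose roots lie on the unit circle (via a resultant or a root-location argument using $|G_n(z)|>0$ analysis), which then reduces to your cyclotomic case (a). Your remark on nonreciprocity of the cofactor is slightly loose: comparing leading and constant coefficients rules out $Q(x)=x^{n-2}Q(1/x)$ but not $Q(x)=-x^{n-2}Q(1/x)$; the cleanest way is to observe that $G_n$ has a unique real root $\theta_n\in(0,1)$ and no real root $>1$, so the noncyclotomic factor inherits $\theta_n$ but not $\theta_n^{-1}$ as a root.
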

 
By definition, for $n \geq 2$,
$\theta_{n} $ is 
the unique root
of the trinomial $-1+x+x^n$ in the interval
$(0,1)$. 
The algebraic integers
$\theta_{n}^{-1} > 1$ are Perron numbers. 
The sequence $(\theta_{n}^{-1})_{n \geq 2}$ 
is decreasing, tends to 1
if $n$ tends to $+\infty$.

\begin{theorem}[Dutykh - Verger-Gaugry \cite{dutykhvergergaugry}]
\label{thm1factorization}
For any $f \in \mathcal{B}_n$, $n \geq 3$,
denote by
$$f(x) = A(x) B(x) C(x) =
-1 + x + x^n +
x^{m_1} + x^{m_2} + \ldots + x^{m_s},$$
where $s \geq 1$, $m_1 - n \geq n-1$, 
$m_{j+1}-m_j \geq n-1$ for $1 \leq j < s$,
the factorization of  $f$ where $A$ is 
the cyclotomic part, 
$B$ the reciprocal noncyclotomic part,
$C$ the nonreciprocal part.
Then 
\begin{itemize}
\item[(i)]
the  nonreciprocal part $C$ is
nontrivial, irreducible 
and never vanishes on the unit circle,
\item[(ii)] if $\gamma > 1$
denotes the real algebraic number
uniquely determined 
by the sequence
$(n, m_1 , m_2 , \ldots , m_s)$ such that
$1/\gamma$ is the unique
real root of $f$ in
$(\theta_{n-1} , \theta_{n})$,
$-C^*(X)$ is the minimal polynomial $P_{\gamma}(X)$
of $\gamma$,
and $\gamma$ is a nonreciprocal algebraic integer.
\end{itemize}
\end{theorem}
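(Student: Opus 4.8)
The plan is to prove Theorem~\ref{thm1factorization} by exploiting the very rigid lacunary structure of $f\in\mathcal{B}_n$ together with the known behaviour of the trinomial $G_n(x)=-1+x+x^n$. First I would set up the factorization $f=ABC$ in the standard way: $A$ is the product of all cyclotomic factors of $f$ (with multiplicity), $C$ is the product of all irreducible nonreciprocal factors, and $B$ is what remains, namely the reciprocal noncyclotomic part; this decomposition is well defined once one knows the nonreciprocal factors are coprime to the reciprocal ones, which follows from the fact that a nonreciprocal irreducible polynomial and its reciprocal are distinct irreducibles. The substantive content is then (i) that $C$ is nontrivial, irreducible, and non-vanishing on $|z|=1$, and (ii) the identification of $-C^\ast$ with $P_\gamma$ for the distinguished root $1/\gamma\in(\theta_{n-1},\theta_n)$.

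For part (i), the key is a perturbation/comparison argument with $G_n$. One shows that $f(x)=G_n(x)+x^{m_1}+\cdots+x^{m_s}$ is, on the closed unit disc, a small perturbation of $G_n$ in the relevant sense because the gap conditions $m_1-n\ge n-1$ and $m_{q+1}-m_q\ge n-1$ force the extra monomials to be ``spread out''. The trinomial $G_n$ has exactly one root $\theta_n$ in $(0,1)$, no roots on $|z|=1$ (by Selmer's theorem its only possible cyclotomic factor is $x^2-x+1$ whose roots are primitive sixth roots of unity, and one checks $G_n$ does not actually vanish there unless $n\equiv5\pmod 6$, in which case that factor is split off into $A$), and its nonreciprocal part $(-1+x+x^n)/(x^2-x+1)$ is irreducible. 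The idea is that $f$ inherits a real root near $\theta_n$ — more precisely, a root in $(\theta_{n-1},\theta_n)$, since $f(\theta_{n-1})$ and $f(\theta_n)$ have opposite signs (use $G_{n-1}(\theta_{n-1})=0$, $G_n(\theta_n)=0$, and estimate the added terms $\sum x^{m_i}$, which are positive and small on this interval because $m_1\ge 2n-1$ makes $\theta^{m_1}$ tiny) — and that this root is simple, lies strictly inside the disc, and is not a root of any cyclotomic polynomial (a cyclotomic root would be a root of unity, but $1/\gamma\in(\theta_{n-1},\theta_n)\subset(0,1)$ is real and not $\pm1$, hence not a root of unity). Thus the irreducible factor of $f$ containing $1/\gamma$ is noncyclotomic; one then argues it is nonreciprocal — e.g. by a Rouché/argument-principle count showing $f$ has a unique root in a neighbourhood of $\theta_n$ and, were that factor reciprocal, it would also have a root of modulus $>1$ that is forced to interfere with the root distribution of $G_n$, a contradiction; alternatively invoke the nonreciprocity criterion already used for $G_n$ and propagate it through the perturbation. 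Combined with the irreducibility of that factor — which is the heart of the matter and presumably the hard part — this gives that $C$ is nontrivial and equals that single irreducible factor, and the non-vanishing on $|z|=1$ follows from the perturbation estimate away from the sixth roots of unity plus a direct check at the sixth roots of unity.

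Granting (i), part (ii) is then essentially bookkeeping: since $C$ is irreducible and $1/\gamma$ is a root of $f$ lying in $(\theta_{n-1},\theta_n)$ that is noncyclotomic and (by the argument above) a root of $C$ rather than of $A$ or $B$, $C$ is the minimal polynomial of $1/\gamma$ up to sign and normalization. Passing to reciprocals, $C^\ast(X)=X^{\deg C}C(1/X)$ vanishes at $\gamma$, and since $C$ is nonreciprocal of degree equal to that of the minimal polynomial of $\gamma$, and its leading/constant coefficients are $\pm1$, one gets $-C^\ast=P_\gamma$ after fixing the sign so the polynomial is monic with the correct constant term; that $\gamma$ is a nonreciprocal algebraic integer is immediate because $P_\gamma=-C^\ast$ is, up to sign, the reciprocal of the nonreciprocal polynomial $C$, hence itself nonreciprocal and monic with integer coefficients.

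The main obstacle I anticipate is the \emph{irreducibility} of $C$ (equivalently, of the noncyclotomic nonreciprocal factor of $f$). Non-vanishing on the unit circle and the existence/location of the distinguished root are comparatively soft (Rouché, intermediate value theorem, explicit estimates exploiting the exponent gaps), but showing there is no further nontrivial nonreciprocal splitting requires a genuine argument — typically a careful analysis of how the zeros of $f$ cluster near those of $G_n$ under the lacunary perturbation, showing that exactly one zero escapes into $(0,1)$ and that the ``extra'' zeros introduced by the monomials $x^{m_i}$ all accumulate near $|z|=1$ and organize into cyclotomic and reciprocal factors, leaving the nonreciprocal part to be exactly the single Selmer-type irreducible piece inherited from $G_n$. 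This is where I would expect to lean most heavily on the detailed results of \cite{dutykhvergergaugry} concerning the class $\mathcal{B}_n$.
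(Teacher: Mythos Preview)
The paper does not contain a proof of this theorem at all: it is stated as a quoted result from \cite{dutykhvergergaugry} (note the attribution ``Dutykh -- Verger-Gaugry \cite{dutykhvergergaugry}'' in the theorem header) and is used as an input in Section~\ref{S3.2}. There is therefore nothing in the present paper to compare your proposal against.

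As for the proposal itself, it is a reasonable outline of the softer parts --- existence and location of the real root in $(\theta_{n-1},\theta_n)$, non-vanishing on $|z|=1$ away from sixth roots of unity, the bookkeeping in (ii) --- but you explicitly acknowledge that the central claim, the \emph{irreducibility} of $C$, is the part you cannot carry out and would have to import from \cite{dutykhvergergaugry}. That is precisely the nontrivial content of the theorem, so your sketch is not a self-contained proof but rather a reduction to the same external reference the paper cites. In short: the paper offers no proof here, and neither does your proposal once the deferred irreducibility step is accounted for.
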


\

Now let us assume the 
existence of a reciprocal  
algebraic integer
$\beta$ in the interval
$(\theta_{n}^{-1} , 
\theta_{n-1}^{-1})$
for some integer $n \geq 3$ ($n$ is fixed),
with ${\rm M}(\beta) < 1.176280\ldots$
Lehmer's number. 
It is canonically associated with, and
characterized by,
two analytic functions:
\begin{enumerate}
\item[(i)]
its minimal polynomial, say
$P_{\beta}$, which is monic and
reciprocal meaning\\
$X^{\deg P_{\beta}} P_{\beta}(1/X)= P_{\beta}(X)$;
denote $d := \deg P_{\beta}$,
$H:=$ the height of $P_{\beta}$;
the minimal polynomial
$P_{\beta}(X)$ of 
$\beta > 1$ can be written
\begin{equation}
\label{requalityPP}
P_{\beta}(X) = \widetilde{P_{\beta}}(X^r)
\end{equation}
for some integer $r \geq 1$ and
some $\zb$-minimal integer polynomial  
$\widetilde{P_{\beta}}(X)$.
The integer $r$ is the largest one
such that \eqref{requalityPP} holds; 
it depends upon $\beta$.
The $\beta$s such that $r \geq 2$ are excluded 
in the following.
\item[(ii)] the Parry Upper function 
$f_{\beta}(x)$ at
$\beta^{-1}$, which is the
generalized Fredholm
determinant of the $\beta$-transformation $T_{\beta}$
\cite{baladikeller}
which is a power series
with coefficients in the alphabet $\{0,1\}$
except the constant term equal to $-1$,
with distanciation between the exponents of the monomials:
$$f_{\beta}(x) := -1 + x + x^n +
x^{m_1} + x^{m_2} + \ldots + x^{m_s} +
\ldots$$
where $m_1 - n \geq n-1$, 
$m_{q+1}-m_q \geq n-1$ for $q \geq 1$.
$\beta^{-1}$ is the unique 
zero of $f_{\beta}(x)$
in the unit interval $(0,1)$.
The analytic function
$f_{\beta}(z)$ is related to the dynamical
zeta function $\zeta_{\beta}(z)$ of the
$\beta$-shift \cite{frougny} 
\cite{lagarias}
\cite{lothaire} by:
$f_{\beta}(z) = -1/\zeta_{\beta}(z)$.
Since $\beta$ is 
reciprocal, with the two real roots
$\beta$ and $1/\beta$, the series
$f_{\beta}(x)$ is never a polynomial,
by Descartes's rule on sign changes on the 
coefficient vector.
The algebraic integer $\beta$ is 
associated with the infinite
sequence of exponents $(m_j)$.  
\end{enumerate}

All the polynomial sections 
$S_{\gamma_{s}}^{*}(x)
:=-1 + x + x^n +
x^{m_1} + x^{m_2} + \ldots + x^{m_s}$
of $f_{\beta}(x)$
\, are polynomials of the class
$\mathcal{B}_n$, mostly
irreducible by the asymptotic reducibility conjecture, 
but not necessarily irreducible.
For every $s \geq 1$, 
denote by $\gamma_s > 1$ the (non-reciprocal)
algebraic integer which is such that
$\gamma_{s}^{-1}$ is the unique zero
in $(0,1)$
of the polynomial section
$S_{\gamma_{s}}^{*}(x)$
of $f_{\beta}(x)$.
We have: $\deg \gamma_{s}^{-1} =
\deg S_{\gamma_{s}}^{*}$ if and only if
$S_{\gamma_{s}}^{*}(x)$ is
irreducible.
Moreover $f_{\beta}(\beta^{-1})=0$ and
$\lim_{s \to \infty} \gamma_{s}
=
\beta$.

\

{\it We now apply Theorem \ref{mainthm}}:

\

The integer $n \geq 3$ is fixed.
For all $s$ such that
$\deg S_{\gamma_{s}}^{*}
\geq \deg P_{\beta}$, the identity
$$\mathbb{Q}(\gamma_s) =
{\rm Per}_{\mathcal{A}}(\gamma_s),$$
holds
with
$\mathcal{A}
=
\{-m, \ldots, +m\} \subset \zb$\,,
$m=\lceil2((2^{d} -1) H + 2^d)/3\rceil$.
By Theorem \ref{thm1factorization},
for any $s \geq 0$,
$\gamma_{s}^{-1}$ has no conjugate
on the unit circle.
The polynomial value $P_{\beta}(\gamma_s)
\in \mathbb{Q}(\gamma_s)$
is eventually periodic
\begin{equation}
\label{PeventPeriodic}
P_{\beta}(\gamma_s) 
=
R(\gamma_{s}^{-1}) + \frac{1}{\gamma_{s}^L} 
\sum_{j=0}^{\infty} \frac{1}{\gamma_{s}^{j \rho }} 
T(\gamma_{s}^{-1})
\qquad \in {\rm Per}_{\mathcal{A}}(\gamma_s)
\end{equation}
with $L, \rho$ and $R(X), T(X)$, depending upon
$s$. This representation of
$P_{\beta}(\gamma_s)$ 
starts as
$$
= \frac{a_{w, (s)}}{\gamma_{s}^w}
+\frac{a_{{w+1, (s)}}}{\gamma_{s}^{w+1}}
+\ldots
+\frac{a_{{w+m_s-1, (s)}}}{\gamma_{s}^{w+m_s-1}}
+ \ldots,
\quad a_{j,(s)} \in \mathcal{A}, 
j=w,  \ldots, w+m_s-1,$$
with $a_{w, (s)} \neq 0$ and
$w = w_s \geq 1$, depending upon
$s$,
satisfies
$\frac{\kappa_{\gamma_s , \mathcal{A}}}{\eta} 
\leq \gamma_{s}^{w_s -1}$ 
for some positive constant
$\kappa_{\gamma_s , \mathcal{A}}$ 
depending upon
$\gamma$ and $\mathcal{A}$.
Since $\mathcal{A}$ is independent of $s$,
and that the sequence 
$(\gamma_{s})$
is convergent with limit $\beta > 1$,
there exists a (true) constant $\kappa > 0$
such that 
$\frac{\kappa}{\eta} 
\leq \gamma_{s}^{w_s -1}$
from Theorem 4, Remarks 5 to 7,
in 
\cite{bakermasakovapelantovavavra}.
Since $\lim_{s \to \infty}
P_{\beta}(\gamma_s) =
0=
P_{\beta}(\beta)$, we take
$\eta = \eta_s :=|P_{\beta}(\gamma_s)|$.
The sequence $(\eta_s)$ tends to 0.
We deduce 
$\lim_{s \to \infty} w_s = +\infty$.

Let 
$\Omega \neq
\beta^{-1}$ be a zero
of modulus $< 1$
of $f_{\beta}(x)$.
We assume the existence
of a small disk
$D(\Omega, r)$ centered at
$\Omega$ of radius $r > 0$,
included in the open unit disk, which 
has the property that the only
zero of $f_{\beta}(x)$
in  $D(\Omega, r)$ is $\Omega$.
It is possible since the domain of 
existence of $f_{\beta}(x)$ is at least
the open unit disk
$D(0,1)$. 

The zero $\Omega$ is
limit point of a sequence of zeroes of
the polynomial sections of $f_{\beta}(x)$.
As soon as $s \geq s_0$
for some $s_0$, we assume that the disk 
$D(\Omega, r)$ contains 
only one zero of
$S_{\gamma_{s}}^{*}(x)$.
Denote by $r_s$ this zero. 
Let us assume that $r_s$ is a Galois conjugate
of $\gamma_{s}^{-1}$, and denote by $\sigma_s :
\gamma_s^{-1} \to r_s$ the 
$\qb$-automorphism of conjugation.
This assumption is reasonable by the 
Asymptotic Reducibility Conjecture 
which says that 75 \%
of the polynomial sections are irreducible.

The lenticular zeroes of
$f_{\beta}$ are peculiar zeroes, off the unit circle.
Let us briefly recall what is a lenticular zero
of $f_{\beta}$. Many examples of
lenticular zeroes are given in
\cite{dutykhvergergaugry}. 
The following theorem is Theorem 4 in
 \cite{dutykhvergergaugry}. 

\begin{theorem}
\label{thm2lenticuli}
Let $n \geq 260$.
There exist
two positive constants $c_n$ and $c_{A,n}$ , 
$c_{A,n} < c_n$,
such that the
roots of any
$f \in \mathcal{B}_n$,
$$f(x)
-1 + x + x^n +
x^{m_1} + x^{m_2} + \ldots + x^{m_s},$$
where $s \geq 1$, $m_1 - n \geq n-1$, 
$m_{j+1}-m_j \geq n-1$ for $1 \leq j < s$,
lying in
$- \pi/18 < \arg z < + \pi/18$
either belong to
$$\{z \in \cb \mid ||z|-1| < \frac{c_{A,n}}{n}\},\quad
\mbox{or to}\quad
\{z \in \cb \mid ||z|-1| \geq \frac{c_n}{n}\}.$$
\end{theorem}
The {\em lenticulus of zeroes $\omega$ 
of $f$} is then defined
as
$$\mathcal{L}_{\beta} :=
\{\omega \in \cb \mid f(\omega) =0, |\omega| < 1,
-\frac{\pi}{18} < \arg \omega < +\frac{\pi}{18},~
||\omega|-1| \geq \frac{c_n}{n}\}$$
where $1/\beta \in \mathcal{L}_{\beta}$ 
is the positive real
zero of $f$. If a zero of 
$f$ belongs to $\mathcal{L}_{\beta}$ 
we say that it is a 
{\em lenticular zero of $f$}.

Let us go back to the above assumption.
If $\Omega$ is a lenticular
zero of $f_{\beta}(x)$, then, by
\cite{dutykhvergergaugry}, all the polynomial
sections $S_{\gamma_{s}}^{*}(x)$ do have
also a (unique) lenticular zero close to $\Omega$
which is a conjugate of $\gamma_{s}^{-1}$.
For the non-lenticular zeroes of $f_{\beta}(x)$,
very close to the unit circle,
the above assumption is necessary.

To summarize, for $s \geq s_0$:
$$f_{\beta}(\Omega)=0,
\quad S_{\gamma_{s}}^{*}(r_s)  =0, \quad
r_s = \sigma_{s}(\gamma_s^{-1}),
\quad
|\Omega -\sigma_{s}(\gamma_s^{-1})|<r,
\quad
\lim_{s \to \infty} r_s = \Omega.
$$
Let us show that 
$P_{\beta}(\Omega)=0$.
Let us conjugate \eqref{Wrepresentation}
for $X=\gamma_{s}^{-1}$.
The power series 
\eqref{PeventPeriodic}
specialized at $\gamma_{s}^{-1}$
is eventually periodic, therefore can be conjugated
term by term, once the image of
$\gamma_{s}^{-1}$
by the conjugation $\sigma_s$ is such that
$|\sigma_{s}(\gamma_{s}^{-1})|<1$, to ensure
convergence.
Then
\begin{equation}
\label{Wrepresentation_conjug}
\sigma_{s}(P_{\beta}(\gamma_s))=
W(r_s) = R(r_s) +
r_{s}^{L}\frac{T(r_s)}{1-r_{s}^{\rho}}
=
a_{w_{s}, (s)} r_{s}^{w_s}
+a_{{w_{s}+1, (s)}} r_{s}^{w_{s}+1}
+\ldots
\end{equation}
with 
$$\frac{\kappa}{|P_{\beta}(\gamma_s)|} 
\leq \gamma_{s}^{w_s -1} \qquad
{\rm and}
\qquad
 |r_s|<|\Omega|+r < 1,
 \qquad s \geq s_0.$$
We have, with 
$m = \lceil2((2^{d} -1) H + 2^d)/3\rceil$,
$$|W(r_s)|
\leq
|a_{w_{s}, (s)}| |r_{s}^{w_s}|
+ |a_{{w_{s}+1, (s)}}| |r_{s}^{w_{s}+1}|
+\ldots
$$
$$\leq m \bigl(
|r_{s}^{w_s}| + |r_{s}^{w_{s}+1}|+\ldots
\bigr)
=m 
|r_{s}^{w_s}| \bigl(
1 + |r_s| +|r_s|^2 +\ldots
\bigr).
$$
Then
\begin{equation}
\label{tagga}
|W(r_s)| \leq |r_s|^{w_s} \frac{m}{1-|r_s|}.
\end{equation}
Let us observe that within a period of period length
$\rho$ in 
the power series
\eqref{Wrepresentation_conjug} 
a certain number of coefficients are
equal to zero, and therefore that the upper bound
\eqref{tagga} can be improved using the period 
length $\rho$ and the degree of $T$. 
However it is sufficient
for below.
We deduce 
$$P_{\beta}(\Omega) = \lim_{s \to \infty} W(r_s) =0.$$
Under the above assumptions, we have proved:

\begin{proposition}
\label{omegazerofbetaPbeta}
$$f_{\beta}(\Omega) = 0 \qquad \Longrightarrow
\qquad
P_{\beta}(\Omega)=0.
$$
\end{proposition}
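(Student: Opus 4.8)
The plan is to run Theorem \ref{mainthm} along the whole sequence of truncations $\gamma_s$ with one fixed, $s$-independent alphabet, and then to push the resulting eventually periodic representations of $P_\beta(\gamma_s)$ through the conjugations $\sigma_s\colon\gamma_s^{-1}\mapsto r_s$ into a neighbourhood of $\Omega$, where $|r_s|<1$ forces the conjugated expansion to converge and to be uniformly small. First I would fix, as in the discussion above, the isolating disk $D(\Omega,r)\subset D(0,1)$ and an index $s_0$ such that for $s\geq s_0$ the section $S^{*}_{\gamma_s}$ has a unique zero $r_s$ in $D(\Omega,r)$ with $r_s\to\Omega$, and such that $\deg S^{*}_{\gamma_s}\geq d=\deg P_\beta$. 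By Theorem \ref{thm1factorization}(i) each $\gamma_s^{-1}$ has no conjugate on the unit circle, so Theorem \ref{mainthm} applies with base $\gamma=\gamma_s$ and $P=P_\beta$ (of height $H$ and degree $d$), producing the representation \eqref{PeventPeriodic} with the alphabet $\mathcal{A}=\{-m,\ldots,+m\}$, $m=\lceil 2((2^{d}-1)H+2^{d})/3\rceil$, which does \emph{not} depend on $s$.

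Next I would control the first exponent $w_s$ of \eqref{PeventPeriodic}. Take $\eta=\eta_s:=|P_\beta(\gamma_s)|$; this is admissible because $0\neq|P_\beta(\gamma_s)|\to|P_\beta(\beta)|=0$, so eventually $\eta_s<1$. Part (iii) of Theorem \ref{mainthm} then gives $a_{w_s,(s)}\neq 0$ and $\gamma_s^{w_s-1}\geq\kappa_{\gamma_s,\mathcal{A}}/\eta_s$. The decisive point is that, because $\mathcal{A}$ is independent of $s$ and $\gamma_s\to\beta>1$ stays bounded away from $1$, Theorem 4 and Remarks 5 to 7 of \cite{frougnypelantovasvobodova} provide a \emph{single} constant $\kappa>0$ with $\gamma_s^{w_s-1}\geq\kappa/\eta_s$ for all $s\geq s_0$; since $\eta_s\to 0$, this forces $w_s\to+\infty$.

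Finally I would conjugate. The rewriting-trail identity $W(\gamma_s^{-1})=P_\beta(\gamma_s)$ together with \eqref{Wrepresentation} is a \emph{finite} algebraic identity in $\mathbb{Q}(\gamma_s)$ between integer-coefficient expressions, so applying $\sigma_s$ (which exists as a field embedding precisely because $r_s$ is assumed to be a conjugate of $\gamma_s^{-1}$, and which sends $\gamma_s\mapsto r_s^{-1}$) gives $W(r_s)=R(r_s)+r_s^{L}T(r_s)/(1-r_s^{r})=\sigma_s(P_\beta(\gamma_s))=P_\beta(r_s^{-1})$; and since $|r_s|<1$ this rational expression re-expands as the convergent series \eqref{Wrepresentation_conjug}, $W(r_s)=\sum_{k\geq 0}a_{w_s+k,(s)}\,r_s^{\,w_s+k}$, with all digits bounded by $m$. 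Using $|r_s|\leq|\Omega|+r<1$ I then get
\[
|W(r_s)|\;\leq\;|r_s|^{w_s}\,\frac{m}{1-|r_s|}\;\leq\;(|\Omega|+r)^{w_s}\,\frac{m}{1-|\Omega|-r},
\]
which tends to $0$ as $s\to\infty$ because $w_s\to\infty$. On the other hand $W(r_s)=P_\beta(r_s^{-1})\to P_\beta(\Omega^{-1})$ as $s\to\infty$ by continuity of the polynomial $P_\beta$ (note $\Omega\neq 0$ since $f_\beta(0)=-1$), so $P_\beta(\Omega^{-1})=0$; as $P_\beta$ is reciprocal, $X^{d}P_\beta(1/X)=P_\beta(X)$ yields $P_\beta(\Omega)=\Omega^{d}P_\beta(\Omega^{-1})=0$, which is the claim.

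I expect the main obstacle to be the uniformity of $\kappa$ over the sequence $(\gamma_s)$: one must rule out that the constants $\kappa_{\gamma_s,\mathcal{A}}$ of Theorem \ref{mainthm}(iii) degenerate as $s\to\infty$, since it is exactly the resulting blow-up $w_s\to\infty$ that makes $W(r_s)\to 0$; this is where the $s$-independence of $\mathcal{A}$ and the convergence $\gamma_s\to\beta>1$ are indispensable, via Theorem 4 and Remarks 5 to 7 of \cite{frougnypelantovasvobodova}. A second, more structural point requiring care is the standing hypothesis that $r_s$ really is a Galois conjugate of $\gamma_s^{-1}$, so that $\sigma_s$ is a genuine field embedding and the eventually periodic representation \eqref{PeventPeriodic} may be transported term by term: for lenticular $\Omega$ this is guaranteed by \cite{dutykhvergergaugry}, whereas for non-lenticular $\Omega$ it is assumed, which is reasonable in view of the Asymptotic Reducibility Conjecture.
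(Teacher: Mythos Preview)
Your proposal is correct and follows essentially the same route as the paper: fix the $s$-independent alphabet from Theorem~\ref{mainthm}, use the uniform constant $\kappa$ coming from Theorem~4 and Remarks~5--7 of \cite{frougnypelantovasvobodova} to force $w_s\to\infty$, conjugate the eventually periodic identity by $\sigma_s$, and bound $|W(r_s)|$ by $|r_s|^{w_s}\,m/(1-|r_s|)$. You are in fact slightly more careful than the paper at the final step: since $\sigma_s(\gamma_s)=r_s^{-1}$, one has $\sigma_s(P_\beta(\gamma_s))=P_\beta(r_s^{-1})$, so the limit yields $P_\beta(\Omega^{-1})=0$, and only then does reciprocity of $P_\beta$ give $P_\beta(\Omega)=0$; the paper writes $P_\beta(\Omega)=\lim_{s\to\infty}W(r_s)$ directly, leaving this reciprocity passage implicit.
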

\begin{remark}
As a consequence the properties of the 
analytic function
$$f_{\beta}(x) := -1 + x + x^n +
x^{m_1} + x^{m_2} + \ldots + x^{m_s} +
\ldots,\qquad
|x| < 1,$$
where $m_1 - n \geq n-1$, 
$m_{q+1}-m_q \geq n-1$ for $q \geq 1$
can be used to investigate the geometry of the
zeroes of the polynomial $P_{\beta}(X)$,
in particular  the existence of 
integer polynomials having
a very small Mahler measure below Lehmer's number
$1.176280\ldots$. 
Let us note that
the Parry Upper function $f_{\beta}(z)$
at $\beta$ (here reciprocal) is related to the dynamical
zeta function $\zeta_{\beta}(z)$ of the 
$\beta$-shift by:
$f_{\beta}(z) = -1/\zeta_{\beta}(z)$
(\cite{lagarias};
see Solomyak \cite{solomyak} for the zeroes). 
In this respect, Proposition
\ref{omegazerofbetaPbeta} says that the poles
of modulus $< 1$ of the meromorphic extension
of $\zeta_{\beta}(z)$ in the open unit disk are,
under some assumptions (as mentioned above), zeroes
of the minimal polynomial of
$\beta$. 
\end{remark}

\subsection{Natural and intermediate alphabets along sequences of almost Newman polynomials: examples}
\label{S3.3}

When the base $\gamma$
is fixed, as in Theorem \ref{mainthm},
the intermediate alphabet produced by 
a rewriting trail
has a size growing linearly with 
the height $H$ of the polynomial
$P$.
Leaving $\gamma$ fixed and varying $H$
in $P$, 
when $H$ becomes very large, this 
intermediate alphabet reaches the natural alphabet
$\mathcal{A}_{\gamma}^{(nat)}$, 
becomes equal to it, exceeds it;
so that there is no interest to
proceed with rewriting trails for such polynomials
having a large height, i.e. for $H$
above a certain critical value.

The natural alphabets 
$\mathcal{A}_{\gamma_s}^{(nat)}$
along the sequence
of the polynomial sections
$S_{\gamma_j}^{*}(X)
=-1+x+x^n + x^{m_1} + x^{m_2} + \ldots 
+ x^{m_j}$,  $j \geq 1$,
of a given Parry Upper function
\begin{equation}
\label{fbeta}
f_{\beta}(z) =
-1+x+x^n + x^{m_1} + x^{m_2} + \ldots 
+ x^{m_j}+\ldots
\end{equation}
i.e.
along a sequence of bases
$(\gamma_s)$,
as in Section \ref{S3.2}, 
are huge and
do not remain constant.
Let us take examples.
The following 
alphabets $\mathcal{A}_{\gamma_j}^{(nat)}
=\{-m_{\gamma_j}, \ldots, +m_{\gamma_j}\}$ are calculated
by \eqref{Alphabet} and
Proposition \ref{poly_dominant}.
Denote $m= m_{\gamma_j}$ for short.
The integer $j$ is the number of monomials 
added to the trinomial $-1+x+x^{11}$. All 
the polynomials are irreducible, 
of the same degree
($= 101$), and belong to the class
$\mathcal{B}_{11}$:
\vspace{0.15cm}

\parindent=0cm
j =1:  p := $x^{101} + x^{11} + x - 1$,\\   
$m =2.617526038*10^{365}$
\vspace{0.15cm}

j = 2: p := $x^{101} + x^{21} + x^{11} + x - 1$,\\ 
$m = 4.088496786*10^{288}$;
\vspace{0.15cm}

j = 3: p := $x^{101} + x^{35} + x^{21} + x^{11} + x - 1$, \\
$m  = 3.196582086*10^{151}$;
\vspace{0.15cm}

j = 4: p := $x^{101} + x^{45} + x^{35} + x^{21} + x^{11} + x - 1$, \\
$m = 3.823048784*10^{462}$;
\vspace{0.15cm}

j = 5: p := $x^{101} + x^{57} + x^{45} + x^{35} + x^{21} + x^{11} + x - 1$, \\
$m = 8.866692051*10^{248}$;
\vspace{0.15cm}

j = 6: p := $x^{101} + x^{69} + x^{57} + x^{45} + x^{35} + x^{21} + x^{11} + x - 1$, \\
$m = 4.851172757*10^{224}$;
\vspace{0.15cm}

j = 7: p := $x^{101} + x^{80} + x^{69} + x^{57} + x^{45} + x^{35} + x^{21} + x^{11} + x - 1$,\\ 
$m = 6.062823380*10^{222}$;
\vspace{0.15cm}

j = 7: p := $x^{101} + x^{81} + x^{69} + x^{57} + x^{45} + x^{35} + x^{21} + x^{11} + x - 1$, \\
$m = 4.617819094*10^{1083}$;
\vspace{0.15cm}

j = 8: p := $x^{101} + x^{91} + x^{80} + x^{69} + x^{57} + x^{45} + x^{35} + x^{21} + x^{11} + x - 1$,\\ 
$m = 2.085371358*10^{536}$;
\vspace{0.15cm}

j = 8: p := $x^{101} + x^{90} + x^{80} + x^{69} + x^{57} + x^{45} + x^{35} + x^{21} + x^{11} + x - 1$, \\
$m = 3.484819567*10^{196}$;
\vspace{0.15cm}

No simple law of $m_{\gamma_{j}}$
appears as a function of $j$:
for $j=7$, resp. $j=8$,
a big difference appear in the size of the alphabets,
obtained by varying just one
monomial in the definition of p.
 
\parindent=0.4cm

On the contrary, the alphabets obtained 
by rewriting trails
along the sequence
of the polynomial sections
$S_{\gamma_j}^{*}(X)$,
from a given polynomial, remain constant.

Given $f_{\beta}(x)$ as
in \eqref{fbeta}, the growth rate of the natural
alphabets $\mathcal{A}_{\gamma_j}^{(nat)}$
with the degree 
$m_j$ and the number $j$ of monomials
is investigated, in the Figures
\ref{mahler_mj77} to \ref{mahler_periodic},
in terms of the Mahler measures 
$${\rm M}(S_{\gamma_j}^{*})
=
{\rm M}(-1+x+x^n + x^{m_1} + x^{m_2} + \ldots 
+ x^{m_j});$$
this is sufficient according
to the approximate lower bounds
$$\frac{1}{2}{\rm M}(\gamma_j)^{N_j -1}
|S_{\gamma_j}^{*}(1)|~=~
\frac{j+1}{2}\,{\rm M}(\gamma_j)^{N_j -1}
$$
by \eqref{002conditionMahler},
where the $j$-th integer $N_j$ is the
dominance index relative to
$\gamma_j$,  and
that the identities
${\rm M}(S_{\gamma_j}^{*}) =
{\rm M}(\gamma_j)$,
$S_{\gamma_j}(X)= P_{\gamma_j}(X)$,
hold for 75 \%
of the polynomial sections,
by the Asymptotic
Reducibility Conjecture.
Mahler measures are calculated
by means of Graeffe's method 
\cite{flammangrhinsacepee}
and in PARI/GP \cite{parigp}.
The Mahler measure
${\rm M}(S_{\gamma_j}^{*})$
is a function of the geometry
of the roots of $S_{\gamma_j}^{*}(X)$
which lie inside the open unit disk;
the respective 
roles of the non-lenticular roots
with respect to the lenticular roots
\cite{dutykhvergergaugry}
are investigated in 
 \cite{dutykhvergergaugry2}.
The fairly large values $N_j$ of the dominance indices,
arising from the arithmetics of the
iteration of the companion matrix of
$P_{\beta}$, are not indicated.
The values $N_j$ are related to the dynamical
system with polynomial action, see 
Remark \ref{remarkSysDyn02}.

\begin{figure}
\begin{center}
\includegraphics[width=5.5cm]{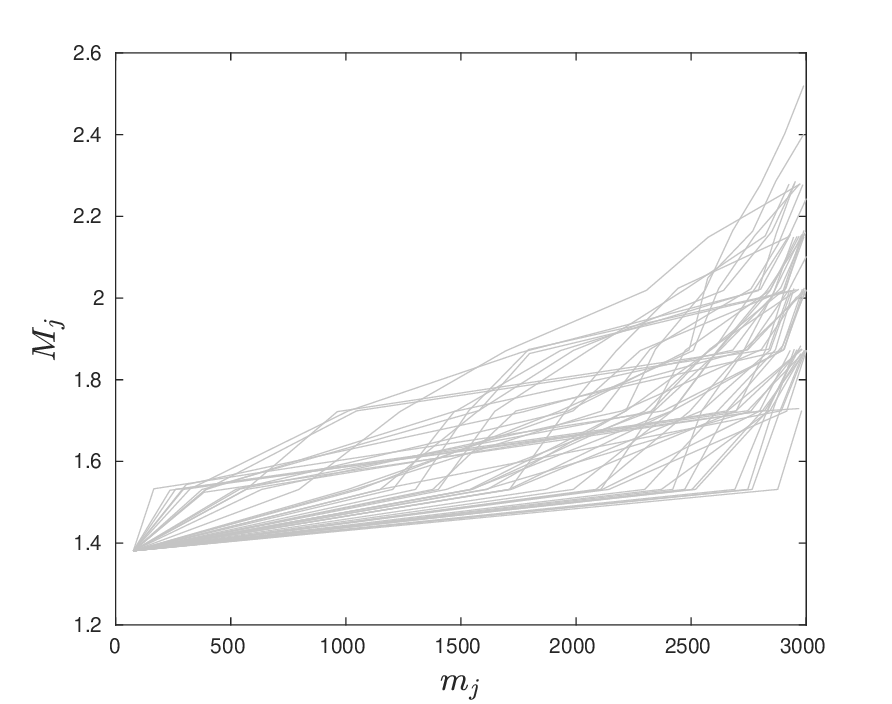}
\end{center}
\caption{Mahler measures ${\rm M}_j
= {\rm M}(S_{\gamma_j}^{*})$
of $j$th-polynomial 
sections $S_{\gamma_j}^{*}$ of
$f_{\beta}(z)$ for
various $\theta_{77}^{-1} < \beta
< \theta_{76}^{-1}$,
as a function of the degree 
$m_j := \deg S_{\gamma_j}^{*}$.
The initial value is ${\rm M}_0
= {\rm M}(-1+x+x^{77}) \approx 1.38$ 
by \cite{flammang} \cite{vergergaugry2}.}
\label{mahler_mj77}
\end{figure}

\begin{figure}
\begin{center}
\includegraphics[width=5.5cm]{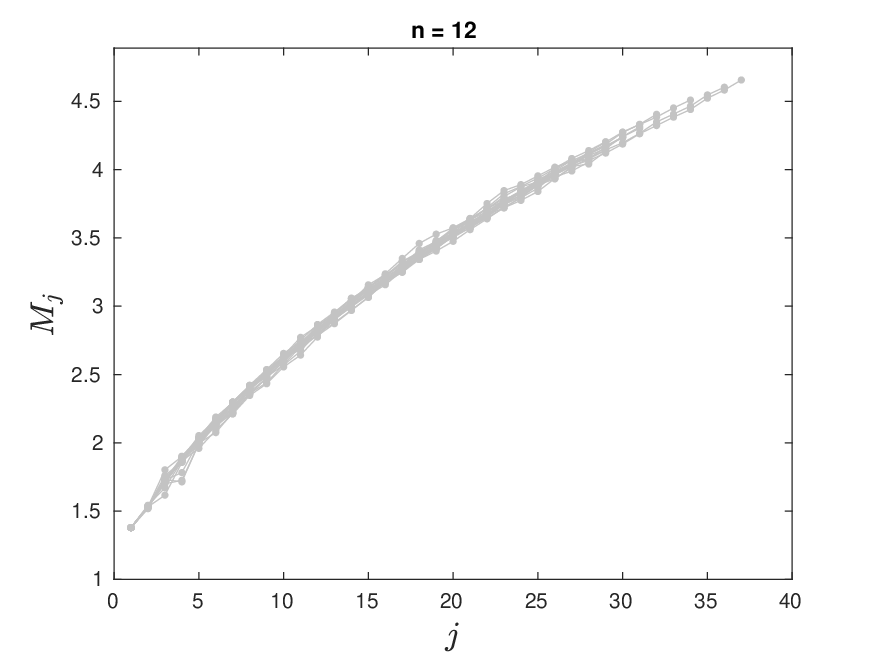} 
\end{center}
\caption{
Mahler measures ${\rm M}_j
= {\rm M}(S_{\gamma_j}^{*})$
of $j$th-polynomial 
sections $S_{\gamma_j}^{*}$ of
$f_{\beta}(z)$ for
various $\theta_{12}^{-1} < \beta
< \theta_{11}^{-1}$,
as a function of the number
$j$ of monomials added to
$-1+x+x^{12}$.
The initial value is 
${\rm M}(-1+x+x^{12}) \approx 1.38$ 
by \cite{flammang} \cite{vergergaugry2}.}
\label{mahler_12}
\end{figure}

\begin{figure}
\begin{center}
\includegraphics[width=5.5cm]{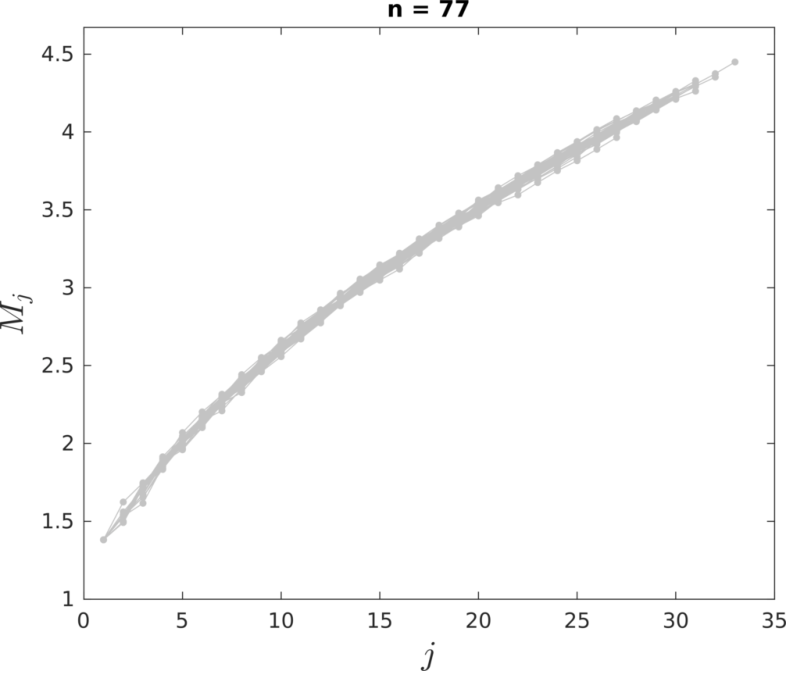} 
\end{center}
\caption{
Mahler measures ${\rm M}_j
= {\rm M}(S_{\gamma_j}^{*})$
of $j$th-polynomial 
sections $S_{\gamma_j}^{*}$ of
$f_{\beta}(z)$ for
various $\theta_{77}^{-1} < \beta
< \theta_{76}^{-1}$,
as a function of the number
$j$ of monomials added to
$-1+x+x^{77}$.
The initial value is 
${\rm M}(-1+x+x^{77}) \approx 1.38$ 
by \cite{flammang} \cite{vergergaugry2}.}
\label{mahler_77}
\end{figure}

\begin{figure}
\begin{center}
\includegraphics[width=8cm]{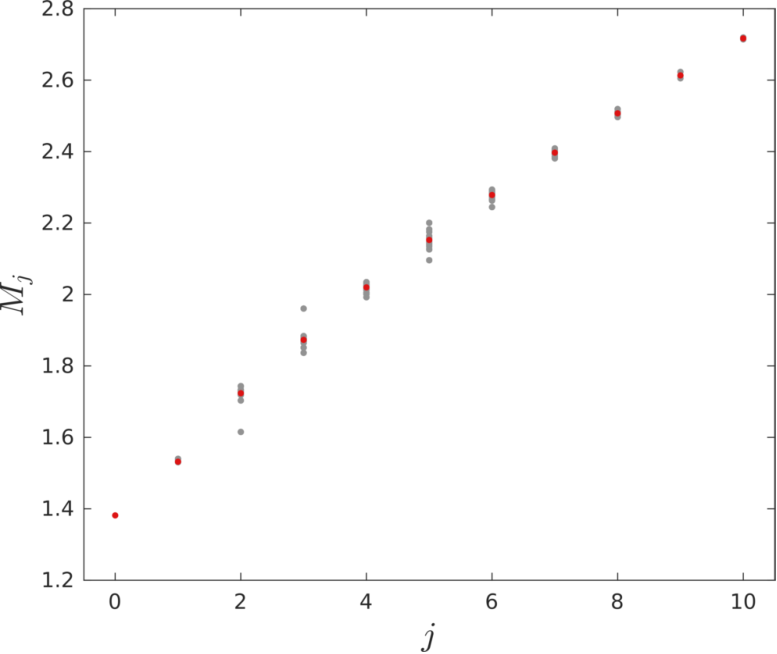} 
\end{center}
\caption{
Mahler measures ${\rm M}_j
= {\rm M}(S_{\gamma_j}^{*})$
of $j$th-polynomial 
sections $S_{\gamma_j}^{*}$ of
$f_{\beta}(z)$ for
various $\theta_{149}^{-1} < \beta
< \theta_{148}^{-1}$,
as a function of the number
$j$ of monomials added to
$-1+x+x^{149}$.
The initial value is 
${\rm M}(-1+x+x^{149}) \approx 1.38$ 
by \cite{flammang} \cite{vergergaugry2}.}
\label{mahler_149}
\end{figure}

\begin{figure}
\begin{center}
\includegraphics[width=8cm]{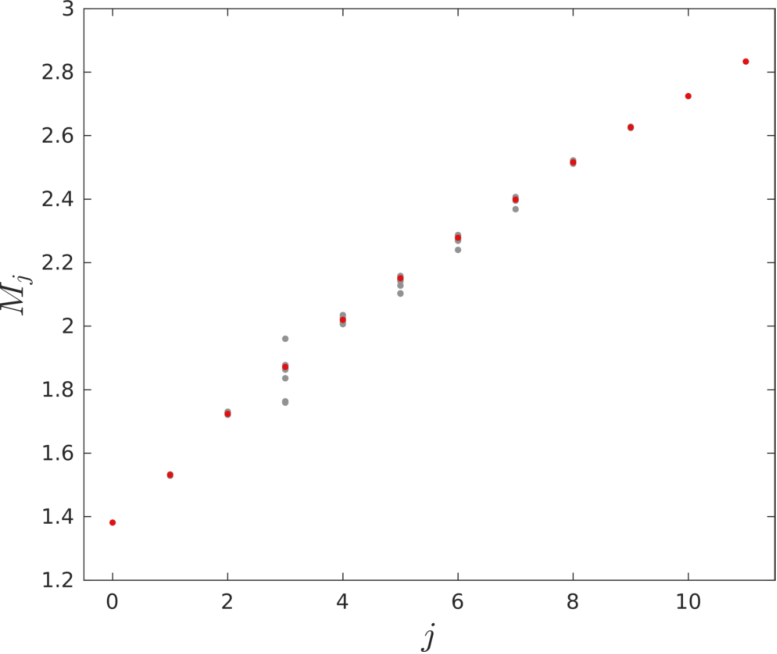} 
\end{center}
\caption{
Mahler measures ${\rm M}_j
= {\rm M}(S_{\gamma_j}^{*})$
of $j$th-polynomial 
sections $S_{\gamma_j}^{*}$ of
$f_{\beta}(z)$ for
various $\theta_{220}^{-1} < \beta
< \theta_{219}^{-1}$,
as a function of the number
$j$ of monomials added to
$-1+x+x^{220}$.
The initial value is 
${\rm M}(-1+x+x^{220}) \approx 1.38$ 
by \cite{flammang} \cite{vergergaugry2}.}
\label{mahler_220}
\end{figure}

\begin{figure}
\begin{center}
\includegraphics[width=6.5cm]{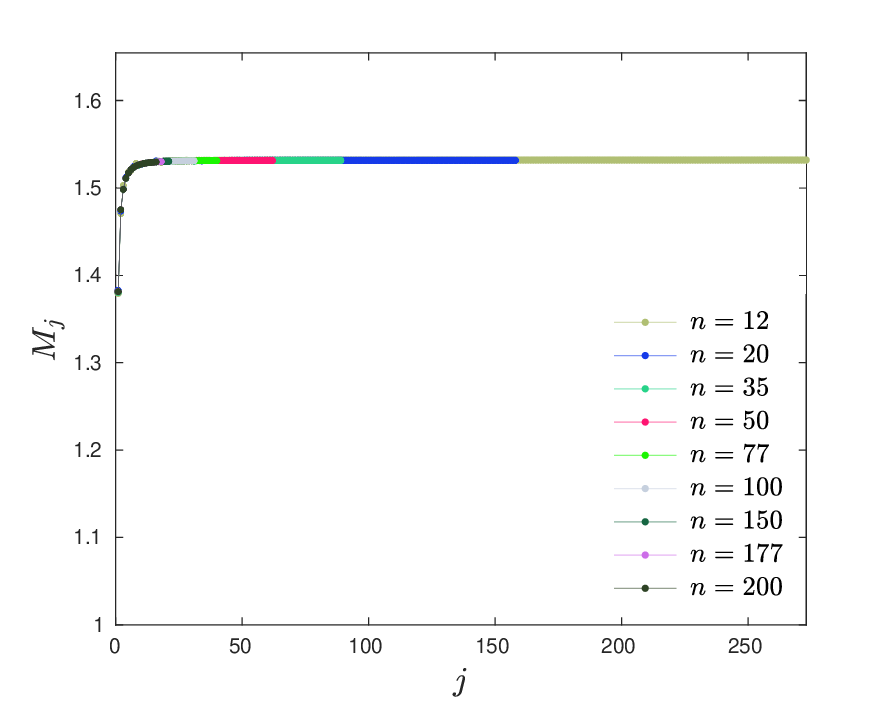} 
\end{center}
\caption{Minimal lacunarity:
Mahler measure ${\rm M}_j
= {\rm M}(S_{\gamma_j}^{*})$
of the eventually periodic
section $S_{\gamma_j}^{*}(x)
= -1 +x +x^n +x^{m_1} + \ldots 
+x^{m_j}$, $m_1 - n = n-1$,
$m_{q+1} - m_q = n-1$ for $q \geq 1$,
as a function of the number of monomials $j$
added to the trinomial
$-1+x+x^n$, for various values of $n$.
The initial value is 
${\rm M}(-1+x+x^{n}) \approx 1.38$ 
by \cite{flammang} \cite{vergergaugry2}.
The increase of ${\rm M}_j$ is 
followed by a plateau.}
\label{mahler_periodic}
\end{figure}

Along the sequence
of the polynomial sections
$S_{\gamma_j}^{*}(X)$
of $f_{\beta}(z)$, for 
$\theta_{n}^{-1} < \beta < \theta_{n-1}^{-1}$,
$n \geq 3$,
any algebraic integer, 
the sequence of the exponents
$(m_j)$ satisfies
\begin{equation}
\label{mqplusunmq}
1 + \frac{n-1}{m_j} 
\leq 
\frac{m_{j+1}}{m_j} ,
\qquad
\limsup_{j \to \infty} \frac{m_{j+1}}{m_j}
\leq 
\frac{\lo({\rm M}(\beta))}{\lo \beta}
 \end{equation}
by Theorem 1.1 in \cite{vergergaugry06}, so
that the lacunarity in
$f_{\beta}(z)$ remains moderate, and 
the number of monomials
in a section $S_{\gamma_j}^{*}(X)$ is always
increasing with $j$ with positive density. 
The topological entropies 
$\lo {\rm M} (\beta)$
and $\lo \beta$
are related to the two dynamical systems
involved in the sequences of coefficients
of $f_{\beta}(z)$,
see Remark \ref{remarkSysDyn02}
and \cite{vergergaugry06}.

In Figure \ref{mahler_12} to Figure
\ref{mahler_220} 
the Mahler measures ${\rm M}_j
= {\rm M}(S_{\gamma_j}^{*})$
of the $j$th-polynomial 
sections $S_{\gamma_j}^{*}$ of
$f_{\beta}(z)$ are represented for
various $\theta_{n}^{-1} < \beta
< \theta_{n-1}^{-1}$,
as a function of the number
$j$ of monomials added to
$-1+x+x^{n}$, for different values of $n$:
$n=12, 77, 149, 220$.
The initial value is 
${\rm M}(-1+x+x^{n}) \approx 1.38$ 
by 
\cite{flammang},
\cite{vergergaugry2}.
The growth rates are close to obey
a linear growth with $j$.
Each time, the growth of ${\rm M}_j$ 
occurs with $j$, without stabilization except
in Figure \ref{mahler_periodic} where a plateau appears
when the sequence of exponents 
$(m_j)$ is purely periodic (with period length
$n-1$).

\

\

\

\section*{Acknowledgements}

We would like to thank the anonymous referee for his helpful comments.

\frenchspacing

\end{document}